\newtheorem{theorem}{Theorem}[section]
\newtheorem{lemma}[theorem]{Lemma}
\newtheorem{proposition}[theorem]{Proposition}
\newtheorem{definition}[theorem]{Definition}
\newcommand{\C}{\mathbb{C}}
\newcommand{\minimatrix}[4]{\begin{pmatrix} #1 & #2 \\ #3 & #4 \end{pmatrix}  }
\begin{document}

\title[Planar rook algebra with colors and Pascal's simplex]{Planar rook algebra with colors and Pascal's simplex}
\author[S. Mousley]{Sarah Mousley}
\author[N. Schley]{Nathan Schley}
\author[A. Shoemaker]{Amy Shoemaker}

\address{Utah State University 3900 Old Main Hill, Logan, UT 84322 \\ 
Department of Mathematics and Statistics}

\address{Department of Mathematics
South Hall, Room 6607
University of California
Santa Barbara, CA 93106-3080}

\address{Pomona College 610 North College Avenue, Claremont, CA 91711 \\
Department of Mathematics}

\keywords{planar rook algebra, diagrammatic algebra, Pascal's simplex}

\begin{abstract}
We define $P_{n,c}$ to be the set of all diagrams consisting of two rows of $n$ vertices with edges, each colored with an element in a set of $c$ possible colors, connecting vertices in different rows. Each vertex can have at most one edge incident to it, and no edges of the same color can cross. 
In this paper, we find a complete set of irreducible representations of $\C P_{n,c}$. We show that the Bratteli diagram of $\C P_{0,c} \subseteq \C P_{1,c} \subseteq \C P_{2,c} \subseteq \cdots$ is Pascal's $(c+1)$-simplex, and use this to provide an alternative proof of the well-known recursive formula for multinomial coefficients.
\end{abstract}

\maketitle

\section{Introduction} \label{secIntroduction}

In this paper, we define a new algebra, the $c$ colored planar rook algebra, which we will denote by $\C P_{n,c}$. Our algebra $\C P_{n,c}$ is a unital, finite-dimensional vector space over $\C$ with an associative multiplication, the basis elements of which are diagrams with two rows of $n$ vertices and edges colored with at most $c$ colors. Our paper is a generalization of the results of Flath, Halverson, and Herbig in \cite{flath}, where all irreducible representations of the single colored planar rook algebra, $\C P_{n,1}$, are described explicitly and an alternative proof to the binomial recursion formula ${n \choose k}={n-1 \choose k-1} + {n-1 \choose k}$ is given. Similarly, through a consideration of the representation theory of our generalized algebra $\C P_{n,c}$, we will generate a proof of the recursive formula for multinomial coefficients that govern Pascal's $(c+1)$-simplex,
$$\binom{n}{n_0,n_1, \ldots, n_c}=\binom{n-1}{n_0-1,n_1, \ldots,n_c}+\binom{n-1}{n_0,n_1-1, \ldots, n_c}+\cdots +\binom{n-1}{n_0,n_1, \ldots, n_c-1}.$$

Now $\C P_{n,1}$ is a subalgebra of the partition algebra, a diagrammatic algebra defined independently by Jones \cite{jones} and Martin \cite{martin} to generalize the Temperley-Lieb algebra and the Potts model in statistical mechanics. Colorings of various subalgebras of the partition algebra have been investigated in recent years (see \cite{kennedy}, \cite{orellana}, \cite{parvathi}). A colored planar rook algebra like $\C P_{n,c}$, however, to the best of our knowledge, has yet to be explored. 

Our goal is to investigate the representation theory of $\C P_{n,c}$. Specifically, our main goals are the following:
\begin{itemize}
\item Find all irreducible representations of $\C P_{n,c}$ (Theorem \ref{main}).

\

\item Show that $\C P_{n,c}$ is isomorphic to a direct sum of matrix algebras (Theorem \ref{matrixiso}).

\
 
\item Show, through embedding $\C P_{n-1,c}$ into $\C P_{n,c}$, that the Bratteli diagram of the $\C P_{n,c}$ tower $\C P_{0,c} \subseteq \C P_{1,c} \subseteq \C P_{2,c} \subseteq \cdots$ is Pascal's $(c+1)$-simplex, a $(c+1)$-dimensional version of Pascal's triangle (Section \ref{section bratteli}). 

\

\item Show that the character of a diagram as a linear operator on an irreducible representation of $\C P_{n,c}$ is a product of $c$ binomial coefficients (Theorem \ref{character}).
\end{itemize}

\bigskip
\section{Defining the colored planar rook algebra, $\C P_{n,c}$} \label{section defining planar}

Throughout this paper, unless stated otherwise, we fix an integer $c$ such that $c \geq 1$. For $i \in \{1,\ldots, c\}$ we let $u_i$ denote the element in the ring $\mathbb{Z}_2^c$ with a 1 in the $i^{th}$ entry and zeros elsewhere, and we let ${0}$ denote the element in $\mathbb{Z}_2^c$ with zeros in every entry.   

Define $R_{0,c}= \{\emptyset\}$, and for $n \geq 1$ let $R_{n,c}$ denote the set of $n \times n$ matrices with entries from $\{{0},u_1,\ldots, u_c\}$ having at most one element from the set $\{u_1,\ldots,u_c\}$ in each row and in each column. For example, 
$\minimatrix{u_1}{0}{0}{u_2}, \minimatrix{0}{u_2}{0}{0} \in R_{2,2}$, but $\minimatrix{u_1}{u_2}{0}{0} \not \in R_{2,2}$. 
 We call these matrices \textit{rook matrices} since each $u_i$ can be thought of as a rook in a non-attacking position on an $n \times n$ chessboard.  

To each element of $\{u_1,\ldots,u_c\}$ we associate a unique color, and to each matrix in $R_{n,c}$ we associate a diagram with colored edges. If $M \in R_{n,c}$, then the corresponding diagram is the graph on two rows of $n$ vertices, such that vertex $i$ in the top row is connected to vertex $j$ in the bottom row by an edge with color $u_k$ if and only if $m_{ij}=u_k$. For example, in $R_{4,2}$ if $u_1$ is solid and $u_2$ is dotted, then

\begin{center}
\raisebox{-10pt}{\begin{tikzpicture}
[scale=.25,auto=center,every node/.style={fill, circle, inner sep=0pt,minimum size=5pt}]
  \node (n1) at (4,9)  {};
  \node (n2) at (4,6)  {};
  \node (n3) at (7,9) {};
  \node (n4) at (7,6)  {};
  \node (n5) at (10,9) {};
  \node (n6) at (10,6) {};
  \node(n7) at (13,9) {};
  \node (n8) at (13,6) {};
  
 \foreach \from/\to in {n3/n6}
    \draw [dotted, thick](\from) -- (\to);  
    
     \foreach \from/\to in {n1/n4,n7/n2}
    \draw [thick, color=black](\from) -- (\to);  
    
\end{tikzpicture}}
\ \ represents \ $\left( \begin{array}{cccc} {0} & u_1 & {0} & {0} \\
{0} & {0} & u_2 & {0} \\
{0} & {0} & {0} & {0} \\
u_1 & {0} & {0} & {0} \\
 \end{array} \right). $

\
\end{center}

Now that we have a diagrammatic description of $R_{n,c}$, we define a diagram multiplication.  For $d_1,d_2 \in R_{n,c}$ we define $d_1d_2$ by stacking $d_1$ on top of $d_2$ and fusing together the bottom vertices of $d_1$ with the top vertices of $d_2$. We let $d_1d_2$ be the diagram whose edges correspond to all monochromatic paths from the top of $d_1$ to the bottom of $d_2$. This diagrammatic multiplication is equivalent to computing the product of $d_1$ and $d_2$ as matrices. For example, in $R_{3,2}$ we having the following:
$$
\hspace{1 in}
\begin{tikzpicture}
 [scale=.15,auto=left,every node/.style={fill, circle, inner sep=0pt,minimum size=4.5pt}]
  \node (n1) at (4,18)  {\tiny $ $};
  \node (n2) at (4,14)  {\tiny $ $};
  \node (n3) at (7,18) {\tiny $ $};
  \node (n4) at (7,14)  {\tiny $ $};
  \node (n5) at (10,18) {\tiny $ $};
  \node (n6) at (10,14) {\tiny $ $};
    \node (m1) at (4,12)  {\tiny $ $};
  \node (m2) at (4,8)  {\tiny $ $};
  \node (m3) at (7,12) {\tiny $ $};
  \node (m4) at (7,8)  {\tiny $ $};
  \node (m5) at (10,12) {\tiny $ $};
  \node (m6) at (10,8) {\tiny $ $};

\foreach \from/\to in {m5/m2}
    \draw [dotted, thick, color=black](\from) -- (\to);    
\foreach \from/\to in {n1/n2, n3/n6,n5/n4, m1/m4}
    \draw [thick, color=black](\from) -- (\to);   

\end{tikzpicture}
\hspace*{-58pt}
\raisebox{33pt}{$d_1=$}
\hspace{58pt} 
\hspace*{-79pt}
\raisebox{10pt}{$d_2=$}
\hspace{79pt}
\hspace*{-40pt}
\raisebox{22pt}{ $=$  }
\hspace{40pt}
\hspace*{-40pt}
\raisebox{15pt}{
\begin{tikzpicture}
[scale=.15,auto=left,every node/.style={fill, circle, inner sep=0pt,minimum size=4.5pt}]
  \node (v1) at (4,6)  {\tiny $ $};
  \node (v2) at (4,2)  {\tiny $ $};
  \node (v3) at (7,6) {\tiny $ $};
  \node (v4) at (7,2)  {\tiny $ $};
  \node (v5) at (10,6) {\tiny $ $};
  \node (v6) at (10,2) {\tiny $ $};   
\foreach \from/\to in {v4/v1}
    \draw [thick, color=black](\from) -- (\to); 
 \end{tikzpicture}}
 \hspace{40pt}
 \hspace*{-35pt}
 \raisebox{22pt}{$= \ d_1d_2$}
 \hspace{35pt}
$$ 

$$d_1d_2=\left(\begin{array}{ccc} u_1 & 0 & 0  \\
0 & 0 & u_1 \\
0 & u_1 & 0 \\
\end{array} \right)
\left(\begin{array}{ccc} 0 & u_1 & 0 \\
0 & 0 & 0 \\
u_2 & 0 & 0 \\
\end{array} \right)=
\left(\begin{array}{ccc} 0 & u_1 & 0 \\
 0 & 0 & 0 \\
0 & 0 & 0 \\
\end{array} \right).$$

\noindent Observe that $R_{n,c}$ is a semigroup, and does not contain an identity diagram unless $c=1$ or $n=0$. 

We say an element in $R_{n,c}$ is \textit{planar} if its diagram can be drawn (keeping inside of the rectangle formed by its vertices) such that no edges of the same color cross. Define $P_{0,c}=\{\emptyset\}$, and for $n \geq 1$ define $P_{n,c}$ to be the set of all planar diagrams in $R_{n,c}$. For example, $d_2$ and $d_1d_2$ in the example above are planar, but $d_1$ is not, as two solid edges cross each other. Diagram multiplication shows the product of planar diagrams is planar, and thus $P_{n,c}$ is a sub-semigroup of $R_{n,c}$. 

Throughout this paper, we are concerned only with planar diagrams. Considering the semigroup $P_{n,c}$ will allow us to make a combinatorial connection between rook diagrams and Pascal's simplex in Section \ref{section bratteli}. For now, we note the cardinality of $P_{n,c}$.

\begin{proposition}\label{pnc card} The number of diagrams in $P_{n,c}$ is given by
 $$|P_{n,c}|=\sum_{n_0+\cdots + n_c=n}\binom{n}{n_0, \cdots,  n_c}^2,$$ 
 \noindent where each $n_i$ is a non-negative integer.
\end{proposition}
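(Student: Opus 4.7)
My plan is to classify each diagram in $P_{n,c}$ by its color profile, then show that once the color profile is fixed on the top and bottom rows, the edges are determined.

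For $d \in P_{n,c}$, let $n_i$ denote the number of edges of color $u_i$ for $i=1,\ldots,c$, and set $n_0 = n - (n_1 + \cdots + n_c)$, the number of unmatched vertices in a single row (top and bottom rows must use the same number of vertices per color since each edge has one endpoint in each row). Then $(n_0, n_1, \ldots, n_c)$ is a weak composition of $n$ into $c+1$ parts, and the sum on the right side of the proposition indexes exactly these compositions. So it suffices to show that for each fixed composition the number of diagrams with that color profile equals $\binom{n}{n_0,\ldots,n_c}^2$.

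To count these, I would set up a bijection between diagrams with profile $(n_0,\ldots,n_c)$ and pairs $(\tau, \beta)$ of functions $\tau, \beta \colon \{1,\ldots,n\} \to \{0,1,\ldots,c\}$ with $|\tau^{-1}(i)| = |\beta^{-1}(i)| = n_i$ for each $i$. Given a diagram $d$, let $\tau(j)$ (resp.\ $\beta(j)$) be the color index of the edge incident to the $j$-th top (resp.\ bottom) vertex, or $0$ if that vertex is unmatched. Conversely, given such $(\tau, \beta)$, reconstruct the diagram color by color: for each $i \in \{1,\ldots,c\}$, I must connect the $n_i$ top vertices in $\tau^{-1}(i)$ to the $n_i$ bottom vertices in $\beta^{-1}(i)$ by color-$u_i$ edges with no two of them crossing. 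The key combinatorial fact is that a non-crossing perfect matching between two ordered sets of equal size is unique, namely the "order-preserving" matching where the $k$-th leftmost on top is joined to the $k$-th leftmost on the bottom. Since edges of different colors are allowed to cross, the color classes can be constructed independently, and the diagram is well-defined and planar.

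Counting the pairs $(\tau, \beta)$ gives $\binom{n}{n_0,\ldots,n_c}$ choices for $\tau$ and the same number for $\beta$, yielding $\binom{n}{n_0,\ldots,n_c}^2$ diagrams of profile $(n_0,\ldots,n_c)$; summing over compositions proves the formula. The only substantive step is the uniqueness of the non-crossing matching within each color, which I expect to handle with a short induction on $n_i$ (the leftmost top endpoint of color $i$ is forced to connect to the leftmost bottom endpoint of color $i$, else an edge below it would be trapped and forced to cross, then remove that edge and recurse).
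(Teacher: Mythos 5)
Your proof is correct and follows essentially the same approach as the paper: fix the color profile $(n_0,\ldots,n_c)$, count the top and bottom color assignments by multinomial coefficients, and observe that the planar (per-color non-crossing) connection is then unique. You simply make explicit the uniqueness of the order-preserving matching within each color, which the paper asserts without proof.
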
 

\begin{proof}
Let $n_0,\ldots, n_c$ be non-negative integers such that $n_0 + \cdots + n_c=n$. Suppose we want to create a diagram $d \in P_{n,c}$ with exactly $n_i$ edges colored $u_i$ for all $i \in \{1,\ldots, c\}$. Then we can choose $n_0$ vertices in the top row of $d$ to be isolated and $n_i$ to be incident to edges colored $u_i$ for all $i \in \{1,\ldots, c\}$ in $\binom{n}{n_0, \cdots, n_c}$ ways. We can make a similar choice in the bottom row of $d$ in $\binom{n}{n_0, \cdots , n_c}$ ways. Once these choices have been made, there is exactly one planar way to connect the vertices. Thus, $|P_{n,c}|=\sum\limits_{n_0+\cdots + n_c=n}\binom{n}{n_0, \cdots, n_c}^2.$ 
\end{proof} 

 We now construct an algebra, $\C P_{n,c}$, associated to the semigroup $P_{n,c}$ by taking formal linear combinations of elements in $P_{n,c}$. 

\begin{definition}
The planar rook algebra $\C P_{n,c}$ is the $\C$-vector space whose basis is $P_{n,c}$. That is, 
$$\C P_{n,c}=\C\text{-span}\{d~|~d\in P_{n,c}\}=\left\{\sum_{d\in P_{n,c}}\lambda_dd~|~\lambda_d\in\C\right\},$$
where linearly extending the multiplication of $P_{n,c}$ equips $\C P_{n,c}$ with multiplication.
\end{definition}

Since the basis of $\C P_{n,c}$ is $P_{n,c}$, the algebra's dimension is equal to $\sum\limits_{n_0+\cdots + n_c=n}\binom{n}{n_0, \cdots, n_c}^2$ by Proposition \ref{pnc card}. Let us also note the distinction between the empty diagram (associated to the zero matrix),
\raisebox{-2.5pt}{$\begin{tikzpicture}
  [scale=.1,auto=left,every node/.style={fill, circle, inner sep=0 pt,minimum size=4pt}]
  \node (n1) at (4,9)  {\tiny $ $};
  \node (n2) at (4,6)  {\tiny $ $};
  \node (n3) at (7,9) {\tiny $ $};
  \node (n4) at (7,6)  {\tiny $ $};
  \node (n5) at (10,9) {\tiny $ $};
  \node (n6) at (10,6) {\tiny $ $};
  \node (n7) at (13,9) {\tiny $ $};
  \node(n8) at (13,6) {\tiny $ $};
  \node (n9) at (16,9) {\tiny $ $};
  \node (n10) at (16,6) {};
 \end{tikzpicture}$},
 and the zero vector ${\bf 0}$. The former is a basis element of $\C P_{n,c}$, while ${\bf 0}$ is the zero linear combination of all elements of $P_{n,c}$.
  
We note that even though in general $P_{n,c}$ does not have an identity element, $\C P_{n,c}$ is unital. For $i\in \{1,\ldots , c\}$ define $I_i$ to be a single vertical edge colored $u_i$, and let $I_0=\raisebox{-6pt}{\begin{tikzpicture}
  [scale=.15,auto=left,every node/.style={circle,fill=black!100,inner sep=0pt, minimum size=4.5pt}]
  \node (n1) at (4,9)  {};
  \node (n2) at (4,6)  {\tiny $ $};
   \foreach \from/\to in {}
    \draw [thick, color=black](\from) -- (\to);
    \end{tikzpicture}}$ be a single pair of isolated vertices.
Then the identity element of $\C P_{1,c}$, denoted by $e_c$, is given by $$e_c=\left(\sum \limits_{i=1}^c I_i\right)-(c-1)I_0.$$ For $d_1 \in P_{n,c}$ and $d_2 \in P_{m,c}$, we define $d_1 \otimes d_2$ to be the diagram resulting from concatenating $d_2$ to the right of $d_1$. We extend this operation linearly to define $g_1 \otimes g_2$ for $g_1 \in \C P_{n,c}$ and $g_2 \in C P_{m,c}$. Using this notation, the identity element of $\C P_{n,c}$ is $$e_c^{\otimes n}= \underbrace{e_c \otimes e_c \otimes \cdots \otimes e_c}_{n}.$$

\bigskip

\section{Irreducible Representations of the Planar Rook Algebra}
In this paper, by an \textit{algebra}, we shall mean a unital, finite-dimensional  associative algebra over $\C$. A \textit{representation} or 
\textit{module} of an algebra $A$ is a vector space $V$ together with an action of $A$ on
$V$ by linear automorphisms. 
An $A$-module $V$ is said to be \textit{irreducible} if the only $A$-submodules of $V$ 
(equivalently, $A$-invariant subspaces of $V$) are trivial or improper. For introductory 
representation theory definitions as these, see \cite{barcelo}.
 
The \textit{regular representation} of $\C P_{n,c}$ is when $\C P_{n,c}$ acts on itself by left multiplication. In this section, we show that the regular representation of $\C P_{n,c}$ decomposes into a direct sum of irreducible $\C P_{n,c}$-modules. From here, we apply the Artin-Weddernburn theory of semisimple algebras, stated below as Theorem \ref{artin} (see \cite{curtis} chp. 4), to give an explicit list of all irreducible $\C P_{n,c}$-modules up to isomorphism (Theorem \ref{main}). 

\begin{theorem}[Artin-Wedderburn] \label{artin}
If the regular representation of an algebra $A$ decomposes into a direct sum of irreducible $A$-modules, then 
\begin{enumerate}[(a)]
\item $A$ is semisimple (every $A$-module can be decomposed into a direct sum of irreducible $A$-modules), and 
\item every irreducible $A$-module is isomorphic to a direct summand of the regular representation of $A$.
\end{enumerate}
\end{theorem}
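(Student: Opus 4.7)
The plan is to establish both parts by reducing everything to the standard structural fact that a module over a ring is \emph{semisimple} (i.e., a direct sum of irreducibles) if and only if it is merely a sum of irreducibles, if and only if every submodule splits off as a direct summand. I would first prove this three-way equivalence as a preliminary lemma, since it is the technical backbone of the whole argument. The direction that is not immediate is showing that a sum of irreducibles admits a direct-sum decomposition, which is handled by choosing a maximal subfamily whose sum is direct (a finite maximality argument in our finite-dimensional setting; Zorn's lemma in general) and then using irreducibility to verify that this subfamily spans the whole module.

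Granted this lemma, part (a) follows in two strokes. Any $A$-module $M$ is a homomorphic image of a free $A$-module $F = \bigoplus_{\alpha} A$, where $\alpha$ ranges over a generating set of $M$. By hypothesis $A$ is a direct sum of irreducible $A$-modules, hence so is $F$, so $F$ is semisimple. Next I would show that quotients of semisimple modules are semisimple, which is immediate from the characterization via splitting: if $N \subseteq F$ then $F = N \oplus N'$ for some submodule $N'$, and $F/N \cong N'$ is itself a submodule of a semisimple module, hence semisimple. Taking $N = \ker(F \twoheadrightarrow M)$ gives that $M$ is semisimple.

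For part (b), let $W$ be an irreducible $A$-module and fix any nonzero $w \in W$. The map $\varphi_w \colon A \to W$, $a \mapsto a \cdot w$, is an $A$-module homomorphism whose image is a nonzero submodule of $W$, so by irreducibility $\varphi_w$ is surjective. Writing $A = \bigoplus_i V_i$ with each $V_i$ irreducible, surjectivity gives $W = \sum_i \varphi_w(V_i)$. Each restriction $\varphi_w|_{V_i}$ has kernel a submodule of the irreducible $V_i$ and image a submodule of the irreducible $W$, so it is either zero or an isomorphism onto $W$. Since $W \neq 0$, at least one restriction is nonzero, yielding $W \cong V_i$ as desired.

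The main obstacle is really just packaging the equivalence of characterizations of semisimplicity cleanly; without it the two parts of the theorem feel like disjoint observations, but once it is in hand each part becomes a one-line consequence of a standard manipulation (pushing through a surjection from a free module for (a), and evaluating at a cyclic vector for (b)). In the finite-dimensional setting we care about, every set-theoretic subtlety in the maximality argument collapses, so the crux is the submodule-splitting lemma rather than anything specific to the algebra $\C P_{n,c}$.
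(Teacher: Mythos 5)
Your proposal is correct. Note, however, that the paper does not prove this theorem at all: it is quoted as a known result from the Artin--Wedderburn theory of semisimple algebras with a citation to Curtis--Reiner (chapter 4), so there is no in-paper argument to compare against. What you have written is the standard textbook proof: the three-way equivalence (direct sum of irreducibles $\Leftrightarrow$ sum of irreducibles $\Leftrightarrow$ every submodule splits) is exactly the right backbone, pushing semisimplicity of $A$ through a surjection from a free module gives (a), and evaluating at a cyclic vector $w$ together with the Schur-type dichotomy for the restrictions $\varphi_w|_{V_i}$ gives (b). The only places where your sketch leans on routine facts you should make explicit in a full write-up are that quotients (equivalently, by the splitting property, submodules) of semisimple modules are semisimple, and that surjectivity of $\varphi_w$ uses unitality of $A$ so that $1\cdot w=w\neq 0$; both are standard and the paper's conventions supply the latter.
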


We note that a subspace $V$ of $\C P_{n,c}$ is $P_{n,c}$-invariant under the action of left multiplication if and only if $V$ is $\C P_{n,c}$-invariant. Thus, in what follows 
we seek to find $P_{n,c}$-invariant, irreducible subspaces of $\C P_{n,c}$ that decompose $\C P_{n,c}$. 

A natural candidate to consider is $Y^n_k$, the subspace of $\C P_{n,c}$ spanned by all elements in $P_{n,c}$ with $k$ or fewer edges. For $d \in P_{n,c}$, let size$(d)$ be the number of edges in $d$. Now diagram multiplication shows that for all $d_1,d_2 \in P_{n,c}$ 
\begin{equation} \label{sizemin}
\text{size}(d_1d_2) \leq \text{min} \{\text{size}(d_1),\text{size}(d_2)\}.
\end{equation}

Thus $Y^n_k$ is $P_{n,c}$-invariant; however, $Y^n_0 \subseteq Y^n_1 \subseteq Y^n_2 \subseteq \cdots$, so the $Y^n_k$ do not decompose $\C P_{n,c}$.  Thus, we consider a change of basis that will lead us to a decomposition of $\C P_{n,c}$ into $P_{n,c}$-invariant subspaces.

Let $d_1,d_2 \in P_{n,c}$. We say that $d_1 \subseteq d_2$ if all edges in $d_1$ are contained in $d_2$ in the same color. If $d_1 \subseteq d_2$, we let size$(d_2,d_1) = \text{size}(d_2)-\text{size}(d_1)$. For each $d \in P_{n,c}$, define

\[x_d=\sum \limits_{d^\prime \subseteq d} (-1)^{\text{size}(d,d')}d^\prime.\] 

It can be verified that $d=\sum \limits_{d' \subseteq d} x_{d'}$, and since $|\{x_d~|~ d \in P_{n,c}\}|=|P_{n,c}|$, we see that $\{x_d ~|~d \in P_{n,c}\}$ is a basis for $\C P_{n,c}$ over $\C$.  
We will describe the action of $\C P_{n,c}$ on our new basis $\{x_d~|~ d \in P_{n,c}\}$ (Proposition \ref{3.1eq}), but first we must develop some notation. 

For a planar rook diagram $d\in P_{n,c}$, let $\tau(d)=(\tau_0(d),\ldots, \tau_c(d))$, where  $\tau_0(d)$ is the set of all isolated vertices in the top row of $d$ and $\tau_i(d)$ is the set of vertices in the top row of $d$ incident to an edge colored $u_i$ for $i \in \{1,\ldots,c\}$. We define $\beta(d)=(\beta_0(d), \ldots, \beta_c(d))$ similarly to describe the bottom of vertices of $d$.
 
For example, if $u_1$ is solid, $u_2$ is dotted, and
$d=\raisebox{-5pt}{\begin{tikzpicture}
  [scale=.15,auto=left,every node/.style={fill, circle, inner sep=0pt,minimum size=4pt}]
  \node (n1) at (4,9)  {\tiny $ $};
  \node (n2) at (4,6)  {\tiny $ $};
  \node (n3) at (7,9) {\tiny $ $};
  \node (n4) at (7,6)  {\tiny $ $};
  \node (n5) at (10,9) {\tiny $ $};
  \node (n6) at (10,6) {\tiny $ $};
  \node (n7) at (13,9) {\tiny $ $};
  \node(n8) at (13,6) {\tiny $ $};
  \node (n9) at (16,9) {\tiny $ $};
  \node (n10) at (16,6) {\tiny $ $};

   \foreach \from/\to in {n1/n4, n5/n6}
    \draw [dotted, thick, color=black](\from) -- (\to);
    
\foreach \from/\to in {n9/n10,n2/n3}
    \draw [thick, color=black](\from) -- (\to); 
\end{tikzpicture}} \in P_{5,2}$,
 then $\tau(d)=(\{4\},\{2,5\},\{1,3\})$ and $\beta(d)=(\{4\},\{1,5\},\{2,3\})$. 
Note that given the information stored in these tuples, there is exactly one planar way to connect the vertices in $d$; thus, $\tau(d)$ and $\beta(d)$ completely determine $d$.
We now have the definitions necessary to proceed. 

\begin{proposition} \label{3.1eq}
Let $a,d \in P_{n,c}$. Then 

\[dx_a=\begin{cases}
  x_{da}, & \text{if } \tau_i(a)\subseteq \beta_i(d)\text{ for all } i \in \{1,\ldots, c\} \\
   {\bf 0},     & \text{otherwise.} 
  \end{cases}\]
\end{proposition}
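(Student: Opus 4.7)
The plan is to expand the defining sum for $x_a$ and analyze the product term by term. Starting from
$$dx_a = \sum_{a' \subseteq a} (-1)^{\text{size}(a,a')}\, da',$$
I would split into the two cases stated in the proposition and handle each with a combinatorial argument on subdiagrams.

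\textbf{Case 1:} Assume $\tau_i(a) \subseteq \beta_i(d)$ for every $i \in \{1,\ldots,c\}$. The key observation is that for any $a' \subseteq a$, each edge of $a'$ has its top endpoint in some $\tau_i(a') \subseteq \tau_i(a) \subseteq \beta_i(d)$. By the diagram multiplication rule, every such edge extends through $d$ to produce an edge of $da'$ of the same color with the same bottom endpoint; no edges are killed. This should give a size-preserving bijection $a' \mapsto da'$ from $\{a' : a' \subseteq a\}$ to $\{b : b \subseteq da\}$, with $\text{size}(a,a') = \text{size}(da, da')$. Since both sets have cardinality $2^{\text{size}(a)} = 2^{\text{size}(da)}$, injectivity together with counting yields the bijection, and rewriting the sum gives exactly $x_{da}$.

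\textbf{Case 2:} Assume there exists some $i$ and some $v \in \tau_i(a) \setminus \beta_i(d)$. Let $e$ denote the unique edge of $a$ incident to the top vertex $v$ (colored $u_i$). I would define an involution $\phi$ on $\{a' : a' \subseteq a\}$ by $\phi(a') = a' \triangle \{e\}$ (toggle the edge $e$). Clearly $\text{size}(a, \phi(a'))$ and $\text{size}(a,a')$ differ by $1$, so the corresponding signs in the sum are opposite. It remains to verify that $da' = d\phi(a')$: in whichever of $a', \phi(a')$ contains $e$, the edge $e$ cannot survive multiplication by $d$, because the bottom vertex $v$ of $d$ is not incident to a color $u_i$ edge; so the contribution of $e$ is erased and $da'$, $d\phi(a')$ have identical edge sets. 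Pairing $a'$ with $\phi(a')$ then cancels every term, giving $dx_a = \mathbf{0}$.

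The routine part is Case 1, since once one believes that the subdiagram-to-subdiagram bijection is size-preserving, the algebra is immediate. The main obstacle is the bookkeeping in Case 2: one must check carefully that only the single edge $e$ is affected by the toggling, i.e., that removing or inserting $e$ does not disturb the planar arrangement or the fate of the other edges under multiplication by $d$. This should follow from the fact that $a'$ and $\phi(a')$ differ only at the vertex $v$ and that every other edge's top endpoint lies in $\tau_j(a')$ regardless, so its fate under left multiplication by $d$ is determined independently of $e$. Establishing this cleanly, together with noting that planarity is preserved under taking sub-diagrams, will complete the proof.
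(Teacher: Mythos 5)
Your proposal is correct and follows essentially the same route as the paper's proof: in the first case you show the map $a' \mapsto da'$ preserves sizes so the sum re-indexes to $x_{da}$, and in the second case you cancel terms in $\pm$ pairs by toggling the edge at the offending vertex, which is exactly the paper's bijection between the subdiagrams containing that edge and those omitting it. The only cosmetic difference is that you phrase the pairing as an involution and add an explicit counting argument for the bijection in Case 1; no gap.
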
   

\begin{proof}
Suppose $\tau_{i}(a)\subseteq \beta_{i}(d)$ for all $i \in \{1,\ldots, c\}$. Then for all $a^\prime \subseteq a,$ we have $\beta(da')=\beta(a')$. Thus, size$(a, a') =$ size$(da, da')$. From this we obtain
 
\begin{align*}
dx_a= \sum_{a^\prime \subseteq a} (-1)^{\text{size}(a, a^\prime)} da^\prime =  \sum_{a' \subseteq a} (-1)^{\text{size}(da, da')} da'  =  \sum_{(da)' \subseteq da} (-1)^{\text{size}(da,(da)')} (da)'  = x_{da}.
\end{align*} 

Now suppose there exists $k \in \tau_{i}(a) \setminus \beta_{i}(d)$ for some $i \in \{1,\ldots,c \}$. Let $A_1 = \{a'~|~ a' \subseteq a, \text{ and } k \in \tau_{i}(a')\}$ and $A_2 = \{a' ~|~ a' \subseteq a, \text{ and } k \notin \tau_{i}(a')\}$. Consider the bijection $\phi:A_1 \rightarrow A_2$, where $\phi(a')$ is the diagram resulting from the removal of the edge incident to the $k^{th}$ top vertex of $a'$. Now for all $a' \in A_1$ we know $d a' = d \phi (a')$  and size$(a, a')= $ size$(a, \phi(a')) +1$. Thus we have

\begin{align*}
dx_a= & \sum_{a' \in A_1} (-1)^{\text{size}(a, a')} da' + \sum_{a' \in A_2} (-1)^{\text{size}(a, a')} da'\\
=& \sum_{a' \in A_1} (-1)^{\text{size}(a, \phi(a'))+1} da' + \sum_{a' \in A_1} (-1)^{\text{size}(a, \phi(a'))} d\phi(a') \\
=& \ { \bf 0}.
\end{align*} 

\end{proof}

We now begin to search for irreducible, $P_{n,c}$-invariant subspaces of $\C P_{n,c}$. First, consider
 
\[W^{n,k}=\mathbb{C}\text{-span}\{x_a \ | \ a \in P_{n,c} \text{ and size}(a)=k\}.\]

\

Now for all $a,d \in P_{n,c}$, it is easy to show that $\tau_i(a) \subseteq \beta_i(d)$ for all $i \in \{1,\ldots, c\}$ if and only if size$(a)$ = size$(da)$. This fact, in unison with Proposition \ref{3.1eq}, shows that the $W^{n,k}$ are $P_{n,c}$-invariant. However, the $W^{n,k}$ are reducible, which we shall see by considering the following subspaces of the $W^{n,k}$. 

Throughout, we let 
$$X_n=\{\beta(d)~|~d \in P_{n,c}\},$$ 
\noindent and for $T\in X_n$ we define 

\[W^n_T = \mathbb{C}\text{-span}\{x_a  \ | \ a \in P_{n,c}, \text{ and } \beta(a)=T\}.\]

\

Let $a,d \in P_{n,c}$. Now if $\tau_i(a) \subseteq \beta_i(d)$ for all $i \in \{1,\ldots, c\}$, then $\beta(a)= \beta(da)$ and by Proposition \ref{3.1eq}, we have $dx_a=x_{da}$. If $\tau_i(a) \not \subseteq \beta_i(d)$ for some $i \in \{1, \ldots, c\}$, then $dx_a={\bf 0}$. Thus, $W^n_T$ is a $P_{n,c}$-invariant subspace of $W^{n,k}$, where $k=\sum \limits_{i=1}^c |T_i|$, and thus a $\C P_{n,c}$-module. The following two theorems describe the structure of the $W^n_T$.

\begin{theorem}\label{irred}
Let $T\in X_{n}$. Then $W^{n}_T$ is irreducible (it contains no proper, nonzero $P_{n,c}$-invariant subspaces).
\end{theorem}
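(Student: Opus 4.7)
The plan is to show that $W^n_T$ is a cyclic module generated by any nonzero vector, which is equivalent to irreducibility. Write an arbitrary nonzero $v \in W^n_T$ as $v = \sum_a \lambda_a x_a$, where the sum runs over diagrams $a \in P_{n,c}$ with $\beta(a) = T$, and fix some $a_0$ with $\lambda_{a_0} \neq 0$. The key structural fact is that every such $a$ has the same total size $k = \sum_{i=1}^c |T_i|$, so $\sum_{i=1}^c |\tau_i(a)| = k$ for every $a$ in the support.

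First I would isolate $x_{a_0}$ using a ``picking'' diagram. Let $d_0 \in P_{n,c}$ satisfy $\tau(d_0) = \beta(d_0) = \tau(a_0)$, realized by placing a vertical edge of color $u_i$ on each vertex in $\tau_i(a_0)$ for $i \geq 1$ and leaving all other vertices isolated; this diagram is clearly planar. By Proposition \ref{3.1eq}, $d_0 x_a = x_{d_0 a}$ when $\tau_i(a) \subseteq \beta_i(d_0) = \tau_i(a_0)$ for all $i \in \{1, \ldots, c\}$, and $\mathbf{0}$ otherwise. Because all support diagrams share the total size $k$, any such containment must be an equality, so $\tau(a) = \tau(a_0)$; combined with $\beta(a) = T = \beta(a_0)$, this forces $a = a_0$. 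Since a direct check shows $d_0 a_0 = a_0$, we obtain $d_0 v = \lambda_{a_0} x_{a_0}$, so $x_{a_0}$ lies in the submodule generated by $v$.

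Next I would show that $x_{a_0}$ cyclically generates every basis element of $W^n_T$. Given any $b \in P_{n,c}$ with $\beta(b) = T$, choose a planar diagram $d_b$ with $\tau(d_b) = \tau(b)$ and $\beta(d_b) = \tau(a_0)$; such a $d_b$ exists because $|\tau_i(b)| = |T_i| = |\tau_i(a_0)|$ for each color $i \geq 1$, and within a single color the planar matching of two equal-sized ordered sets is unique. Proposition \ref{3.1eq} then gives $d_b x_{a_0} = x_{d_b a_0}$, and $d_b a_0$ is the unique planar diagram with top $\tau(b)$ and bottom $T$, which must equal $b$. Thus every basis vector $x_b$ of $W^n_T$ lies in the submodule generated by $v$, proving irreducibility.

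The main obstacle is conceptual rather than technical: one must spot the correct picking diagram $d_0$ and recognize that, because every $a$ appearing in $v$ shares the total size of $a_0$, subset containment of the colored parts $\tau_i$ automatically upgrades to equality. Once this rigidity is observed, the transitive action on the basis follows directly from Proposition \ref{3.1eq}.
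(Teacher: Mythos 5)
Your proof is correct and follows essentially the same route as the paper's: act by the ``picking'' diagram with top and bottom equal to $\tau(a_0)$ to isolate $x_{a_0}$, then transport it to every other basis vector via Proposition \ref{3.1eq}. In fact you spell out more carefully than the paper does why the other terms of $v$ are annihilated (containment of the $\tau_i$'s plus equal total size forces equality), which is the one step the paper leaves implicit.
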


\begin{proof}

Suppose $V$ is a nonzero $P_{n,c}$-invariant subspace of $W^n_T$. Choose ${\bf w}\in V$ such that ${\bf w}\neq {\bf 0}$, and expand ${\bf w}$ in the basis of $W^{n}_T$ as  
${\bf w}=\sum\limits_{x_a \in W^n_T}\lambda_ax_a$, where $\lambda_a\in \C$. Now ${\bf w}\neq {\bf 0}$ reveals that $\lambda_a\neq 0$ for some $a$. Let $a'$ be the unique planar diagram with $\tau(a')=\beta(a')=\tau(a)$. By assumption, $V$ is $P_{n,c}$-invariant, so $a'{\bf w}=a'\lambda_{a}x_{a}=\lambda_{a}x_a\in V$, from which we conclude that $x_{a}\in V.$

Now fix arbitrary an element $x_d \in W^n_T$. Let $d'$ be the unique planar diagram with $\beta(d')=\tau(a)$ and $\tau(d')=\tau(d)$. Then $d'x_{a}=x_{d'a}=x_{d}\in V$, since $V$ is $P_{n,c}$-invariant. This shows that all basis elements of $W^{n}_T$ are in $V$. Thus, every nonzero $P_{n,c}$-invariant subspace of $W^n_T$ is $W^n_T$ itself. 

\end{proof}

\begin{theorem} \label{isomorphic}
Let $T,S \in X_n$. Then $W^n_T$ and $W^n_{S}$ are isomorphic $\C P_{n,c}$-modules if and only if $|T_i|=|S_i|$ for all $i \in \{0,\ldots,c\}$.
\end{theorem}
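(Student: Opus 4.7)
The plan is to prove the two directions separately. For the backward direction, assume $|T_i| = |S_i|$ for all $i \in \{0, \ldots, c\}$. Observe that the basis $\{x_a : \beta(a) = T\}$ of $W^n_T$ is parametrized by the choice of top tuple $\tau(a) = (\tau_0(a), \ldots, \tau_c(a))$, which must partition $\{1, \ldots, n\}$ with $|\tau_j(a)| = |T_j|$ for each $j$; under the hypothesis, this same set of tuples parametrizes the basis of $W^n_S$. I would therefore define $\phi: W^n_T \to W^n_S$ on basis elements by $\phi(x_a) = x_{a'}$, where $a'$ is the unique planar diagram with $\tau(a') = \tau(a)$ and $\beta(a') = S$. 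Verifying that $\phi$ is a $\C P_{n,c}$-module map reduces, via Proposition \ref{3.1eq}, to the observation that the condition $\tau_i(a) \subseteq \beta_i(d)$ depends only on $\tau(a)$; when it holds, $da$ and $da'$ share the same top tuple (determined by $d$ and $\tau(a)$ alone) and differ only in their bottoms, so $\phi(x_{da}) = x_{da'} = d \cdot \phi(x_a)$. Since $\phi$ is a bijection on bases, it is an isomorphism.

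For the forward direction, I would use a character argument. For each $T \in X_n$ let $e_T$ denote the unique planar diagram with $\tau(e_T) = \beta(e_T) = T$; this is an idempotent in $\C P_{n,c}$. Using Proposition \ref{3.1eq}, I would show that $e_T$ acts on $W^n_S$ by $e_T \cdot x_b = x_b$ when $\tau_j(b) \subseteq T_j$ for all $j \in \{1, \ldots, c\}$ and by ${\bf 0}$ otherwise; the nontrivial point is that whenever the product is nonzero, direct inspection of the monochromatic-path composition gives $e_T b = b$ as diagrams. Thus the trace of $e_T$ acting on $W^n_S$ equals the number of $b$ with $\beta(b) = S$ and $\tau_j(b) \subseteq T_j$ for every $j \geq 1$, which is $\prod_{j=1}^c \binom{|T_j|}{|S_j|}$; in particular the trace on $W^n_T$ itself is $1$. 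An isomorphism $W^n_T \cong W^n_S$ forces these traces to agree, so $\prod_{j=1}^c \binom{|T_j|}{|S_j|} = 1$, whence $|S_j| \in \{0, |T_j|\}$ for every $j \geq 1$. Running the symmetric computation with $e_S$ yields $|T_j| \in \{0, |S_j|\}$; a short case analysis then gives $|T_j| = |S_j|$ for all $j \geq 1$, and $|T_0| = |S_0|$ follows from $\sum_i |T_i| = n = \sum_i |S_i|$.

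The main obstacle is the identification $e_T \cdot x_b = x_b$ in the nonzero case: one has to verify carefully that when $\tau_j(b) \subseteq T_j$ for every $j \geq 1$, each edge of $b$ threads through the ``identity region'' of $e_T$ with the correct color, so $e_T b$ is literally $b$ as a diagram. Once this is in hand, the rest of the proof is bookkeeping built around Proposition \ref{3.1eq}.
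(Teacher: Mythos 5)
Your proof is correct. The backward direction is essentially the paper's argument in different clothing: your map $\phi(x_a)=x_{a'}$ with $\tau(a')=\tau(a)$ and $\beta(a')=S$ is exactly the paper's map $x_a\mapsto x_ad$ (right multiplication by the unique $d$ with $\tau(d)=T$, $\beta(d)=S$), since $x_ad=x_{ad}$ and $ad$ has top tuple $\tau(a)$ and bottom tuple $S$; the paper just gets the intertwining property for free from associativity, whereas you verify it by hand via Proposition \ref{3.1eq}. The forward direction is where you genuinely diverge. The paper picks, for $|T_i|<|S_i|$, the idempotent $d'=e_{\tau(a)}$ and observes it acts nontrivially on $W^n_T$ but annihilates $W^n_S$ outright (because $|\beta_i(d')|=|T_i|<|S_i|=|\tau_i(a')|$ forces $\tau_i(a')\not\subseteq\beta_i(d')$), so the two modules have different annihilators and cannot be isomorphic. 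You instead compute the full trace of $e_T$ on both modules, getting $\prod_{j\ge 1}\binom{|T_j|}{|S_j|}$ on $W^n_S$ versus $1$ on $W^n_T$, and extract the conclusion from $\prod_j\binom{|T_j|}{|S_j|}=\prod_j\binom{|S_j|}{|T_j|}=1$. Your key computation $e_T\cdot x_b=x_b$ when $\tau_j(b)\subseteq T_j$ for all $j\ge 1$ (and $\mathbf 0$ otherwise) is right: $e_T$ is the partial identity with a vertical edge of color $j$ at each vertex of $T_j$, so every edge of $b$ threads through with matching color and $e_Tb=b$ literally. Your route is slightly heavier (it needs the symmetric computation with $e_S$ and a small case analysis, where the paper's one-sided annihilation argument settles it immediately after a WLOG), but it buys you more: the trace formula you derive is essentially the character computation of Theorem \ref{character} specialized to the idempotents $e_T$, so your argument both proves the theorem and foreshadows the character table.
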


\begin{proof}

Let $T, S \in X_{n}$. First suppose that $|T_i|=|S_i|$ for all $i \in \{0,\ldots,c\}$. Consider the bijection $\phi: W^n_T \rightarrow W^n_S$ given by $\phi(x_a)=x_{a}d$, where $d \in P_{n,c}$  denotes the unique diagram with $\tau(d)=T$ and $\beta(d)=S$. Now fix $d' \in P_{n,c}$. Then,
$$\phi(d'x_a)=(d'x_{a})d=d'(x_ad)=d'\phi(x_a).$$ 
\noindent Thus, we see that $W^n_T \cong W^n_{S}$.

Now suppose $|T_i| \not = |S_i|$ for some $i \in \{1,\ldots, c\}$. Without loss of generality, assume $|T_i|<|S_i|$.  
Consider an arbitrary $x_a \in W^n_T$. Let $d' \in P_{n,c}$ be the unique diagram with $\beta(d')=\tau(d')=\tau(a)$. Then $\tau_j(a)
\subseteq \beta_j(d')$ for all $j \in \{1,\ldots, c\}$. So by Proposition \ref{3.1eq}, $d'x_a=x_{d'a}=x_{a}\not = {\bf 0}$.

However, for all $x_{a'} \in W^{n}_S$, we have $\beta(a')=S$, which implies $ |\beta_i(d')|=|T_i| <|S_i|=|\tau_i(a')|$. It follows that $\tau_i(a') \not \subseteq \beta_i(d')$, and thus by Proposition \ref{3.1eq}, $d'x_{a'}={\bf 0}$. Thus $d'$ acts as the zero map on $W^n_S$, but acts non-trivially on $W^n_T$. Therefore $W^{n}_T$ and $W^{n}_S$ cannot possibly be isomorphic.
\end{proof}

Since we desire a complete set of irreducible representations of $\C P_{n,c}$ up to isomorphism, by Theorem \ref{isomorphic} we make the following definition.
\begin{definition} \label{defisoclass}
For non-negative integers $n_0,\ldots,n_c$ such that $n_0 + \cdots + n_c=n$, let $W^n_{n_0,\ldots , n_c}$ be a representative of the isomorphism class of all $W^{n}_T$ with $|T_i|=n_i$ for all $i\in \{0,\ldots ,c\}$ as $\C P_{n,c}$-modules.
\end{definition}

We now are ready to prove one of our main results. Below we give a decomposition of $\C P_{n,c}$ into irreducible $\C P_{n,c}$-modules and give an explicit description of all $\C P_{n,c}$-modules.

\begin{theorem} \label{main}
\noindent \begin{enumerate} 
 \item The decomposition of $\mathbb{C}P_{n,c}$ into irreducible $\C P_{n,c}$-modules is given by
\begin{align*}
\mathbb{C}P_{n,c} &= \bigoplus \limits_{k=0}^{n}  W^{n,k}= \bigoplus \limits_{k=0}^{n} \bigoplus \limits_{~~~~T \in X_{n,k}}  W^{n}_T \cong \bigoplus\limits_{n_0+\cdots +n_c=n}\binom{n}{n_0,\ldots , n_c}W^n_{n_0,\ldots , n_c},
\end{align*}
\noindent where $X_{n,k}=\{T~|~ T \in X_n, \text{ and } \sum \limits_{i=1}^c |T_i|=k\}$. 
\item The set $\{W^{n}_{n_0,\ldots,n_c} ~| ~n_0+\cdots+n_c=n \}$ is a complete set of irreducible $\mathbb{C}P_{n,c}$-modules.
\end{enumerate}
\end{theorem}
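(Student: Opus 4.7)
The plan is to chain together the preceding results, exhibit the three successive decompositions claimed in (1), and then invoke Artin--Wedderburn for (2). First I would record the observation that $\{x_d : d \in P_{n,c}\}$ is a basis of $\C P_{n,c}$ (established in the excerpt) and that each basis element $x_d$ lies in exactly one $W^{n,k}$, namely the one indexed by $k = \mathrm{size}(d)$. This immediately yields $\C P_{n,c} = \bigoplus_{k=0}^{n} W^{n,k}$ as a vector space, and since each $W^{n,k}$ was already shown to be $P_{n,c}$-invariant, this is a decomposition of $\C P_{n,c}$-modules.

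Next I would refine each $W^{n,k}$. Every basis element $x_a$ with $\mathrm{size}(a) = k$ satisfies $\beta(a) \in X_{n,k}$, and the map $x_a \mapsto \beta(a)$ partitions the basis of $W^{n,k}$ across the spaces $W^n_T$ with $T \in X_{n,k}$. Hence $W^{n,k} = \bigoplus_{T \in X_{n,k}} W^n_T$, and by Theorem \ref{irred} each $W^n_T$ is irreducible. This already gives the middle equality in part (1).

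For the final isomorphism I would group the summands by isomorphism class using Theorem \ref{isomorphic}: $W^n_T \cong W^n_S$ exactly when $|T_i| = |S_i|$ for all $i \in \{0,\ldots,c\}$. The count of $T \in X_n$ with prescribed sizes $|T_i| = n_i$ equals the number of ordered partitions of $\{1,\ldots,n\}$ into labeled blocks of sizes $n_0,\ldots,n_c$, which is $\binom{n}{n_0,\ldots,n_c}$. Summing over all compositions $n_0 + \cdots + n_c = n$ yields the claimed isomorphism
\[
\C P_{n,c} \;\cong\; \bigoplus_{n_0+\cdots+n_c=n} \binom{n}{n_0,\ldots,n_c}\, W^n_{n_0,\ldots,n_c},
\]
with the notational convention of Definition \ref{defisoclass}. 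The one step that requires a hint of care is verifying that $X_n$ really contains every ordered $(c+1)$-partition of $\{1,\ldots,n\}$, but this follows from the fact that any such partition is realized as $\beta(d)$ for some $d \in P_{n,c}$ (e.g.\ take $d$ with $\tau(d) = \beta(d)$, connected by the unique planar matching).

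For part (2), I would apply Theorem \ref{artin}: part (1) exhibits the regular representation as a direct sum of irreducibles, so $\C P_{n,c}$ is semisimple and every irreducible $\C P_{n,c}$-module is isomorphic to a direct summand of the regular representation. Since those summands are precisely the $W^n_T$, and since Theorem \ref{isomorphic} collapses these into the distinct isomorphism classes $W^n_{n_0,\ldots,n_c}$ indexed by compositions $n_0 + \cdots + n_c = n$, the set $\{W^n_{n_0,\ldots,n_c} : n_0 + \cdots + n_c = n\}$ is a complete, irredundant list. I do not foresee a genuine obstacle: essentially all the analytic content has been absorbed into Theorems \ref{irred} and \ref{isomorphic}, and what remains is combinatorial bookkeeping plus a direct appeal to Artin--Wedderburn.
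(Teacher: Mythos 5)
Your proposal is correct and follows essentially the same route as the paper: sort the $x_a$ basis into the $W^{n,k}$ and then into the $W^n_T$, apply Theorems \ref{irred} and \ref{isomorphic} together with the multinomial count of the $T\in X_n$ with prescribed block sizes, and conclude part (2) from Artin--Wedderburn. Your extra remark verifying that every ordered $(c+1)$-partition of $\{1,\ldots,n\}$ actually arises as some $\beta(d)$ is a small point the paper leaves implicit, but it does not change the argument.
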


\begin{proof}
The first equality in part (1) follows from the fact that each element in the basis $\{x_a~|~ a \in P_{n,c}\}$ of $\C P_{n,c}$ is contained in exactly one $W^{n,k}$, and the second equality follows from the fact that a given $x_a$ is contained in exactly one $W^{n}_T$. Finally, the isomorphism follows because $W^n_{T} \cong W^n_{|T_0|,\ldots,|T_c|}$ (see Theorem \ref{isomorphic}) and the number of $T \in X_n$ such that $|T_i|=n_i$ for all $i \in \{0,\ldots, c\}$ is ${n \choose n_0, \ldots, n_c}$. 

Part (2) follows directly from part (1) and Theorem \ref{artin}, the Artin-Weddernburn theory of semisimple algebras. 
\end{proof}

In the remainder of this section, we will further describe the structure of $\C P_{n,c}$ by showing that it is isomorphic to a direct sum of matrix algebras (Theorem \ref{matrixiso}). To do so, we require the following proposition, the proof of which we omit as it is similar to Proposition \ref{3.1eq}. 

\begin{proposition} \label{3.1eqcor} Let $a,d \in P_{n,c}$. Then 
\[x_ad=\begin{cases}
x_{ad}, & \text{if } \beta_{i}(a)\subseteq \tau_{i}(d)\text{ for all } i
\in \{1,\ldots, c\} \\
{\bf 0}, & \text{otherwise.}
\end{cases}\] 
\end{proposition}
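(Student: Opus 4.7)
The plan is to follow the proof of Proposition \ref{3.1eq} \emph{mutatis mutandis}, with the roles of $\tau$ and $\beta$, and of the left factor and right factor, exchanged to handle right multiplication. The key structural observation is identical in spirit to the earlier argument: for $a' \subseteq a$, an edge of $a'$ survives the multiplication $a'd$ if and only if its bottom endpoint is matched by an edge of $d$ of the same color in the top row of $d$. I would proceed by splitting into the same two cases.

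For the first case, assume $\beta_i(a) \subseteq \tau_i(d)$ for all $i \in \{1,\ldots,c\}$. Then for every $a' \subseteq a$, we have $\beta_i(a') \subseteq \beta_i(a) \subseteq \tau_i(d)$, so every edge of $a'$ has its bottom endpoint absorbed by a matching edge of $d$; no edges die in the product. Consequently $\tau(a'd) = \tau(a')$, and the assignment $a' \mapsto a'd$ is a size-preserving bijection (in the sense that $\mathrm{size}(a, a') = \mathrm{size}(ad, a'd)$) between subdiagrams of $a$ and subdiagrams of $ad$. The alternating sum defining $x_a d$ then reindexes to give $x_a d = x_{ad}$, exactly as in the corresponding step of Proposition \ref{3.1eq}.

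For the second case, assume there exists $k \in \beta_i(a) \setminus \tau_i(d)$ for some $i \in \{1,\ldots,c\}$. I would partition the subdiagrams of $a$ as $A_1 = \{a' \subseteq a : k \in \beta_i(a')\}$ and $A_2 = \{a' \subseteq a : k \notin \beta_i(a')\}$, and consider the bijection $\phi : A_1 \to A_2$ that removes the unique color-$u_i$ edge incident to bottom vertex $k$. Because the top vertex $k$ of $d$ carries no edge of color $u_i$, the edge removed by $\phi$ dies in the product, so $a'd = \phi(a')d$ for every $a' \in A_1$. Since $a'$ and $\phi(a')$ differ by exactly one edge, their contributions to the alternating sum have opposite signs, and pairing the $A_1$ terms with the $A_2$ terms via $\phi$ forces total cancellation, yielding $x_a d = \mathbf{0}$.

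The main (and essentially only) obstacle is bookkeeping: one must be careful that in the right-multiplication setting the relevant row of $a$ to compare with $d$ is its \emph{bottom} row, and the relevant row of $d$ is its \emph{top} row, reversing the pattern from the left-multiplication proof. Once this duality is in place, both cases translate more or less word-for-word from the argument for Proposition \ref{3.1eq}.
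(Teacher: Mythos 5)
Your proof is correct and is exactly what the paper intends: the paper omits the argument, stating only that it is ``similar to Proposition \ref{3.1eq},'' and your mutatis mutandis version (swapping $\tau$ with $\beta$ and left with right multiplication, with the reindexing bijection in the first case and the sign-reversing pairing $\phi$ in the second) is precisely that mirrored proof. No gaps.
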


The proposition above describes how the basis $\{x_d~ |~ d \in P_{n,c}\}$ of $\C P_{n,c}$ acts on the basis $\{d ~|~ d \in P_{n,c}\}$ of $\C P_{n,c}$. In what follows, we utilize Propositions \ref{3.1eq} and \ref{3.1eqcor} to describe the action from the perspective of the basis $\{x_d ~|~ d \in P_{n,c}\}$ acting on itself. 
First we make the following definition.

Let $S,T \in X_{n}$ such that $|S_{i}|=|T_{i}|$ for all $i \in \{0,\ldots, c\}$. Define $$x_{S,T}=x_d,$$ where $d$ is the unique planar diagram with $\tau(d)=S \text{ and } \beta(d)=T.$

\begin{proposition} \label{3.3eq}
Let $S,T,U,V \in X_n$ such that $|S_i|=|T_i|$ and $|U_i|=|V_i|$ for all $i\in \{0, \ldots , c\}$. Then
$$x_{S,T}x_{U,V}=\begin{cases}
x_{S,V},& \text{ if } T=U\\
{\bf 0}, &\text{ if } T\neq U.
\end{cases}$$
\end{proposition}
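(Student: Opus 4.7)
The plan is to write $x_{S,T} = x_d$ and $x_{U,V} = x_{d'}$ for the unique planar diagrams $d, d' \in P_{n,c}$ determined by the given tuples (so that $\beta(d) = T$ and $\tau(d') = U$), then expand one of the two $x$-elements back into the original diagram basis and invoke Propositions \ref{3.1eq} and \ref{3.1eqcor} term by term.

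For the case $T = U$: expand $x_{d'} = \sum_{d'' \subseteq d'} (-1)^{\text{size}(d', d'')} d''$, so $x_d x_{d'} = \sum_{d'' \subseteq d'} (-1)^{\text{size}(d',d'')} x_d d''$. By Proposition \ref{3.1eqcor}, the summand $x_d d''$ is nonzero exactly when $T_i = \beta_i(d) \subseteq \tau_i(d'')$ for every $i \in \{1, \ldots, c\}$; combined with $\tau_i(d'') \subseteq \tau_i(d') = U_i = T_i$, this forces $\tau_i(d'') = T_i$, which keeps every colored edge of $d'$ in $d''$ and so forces $d'' = d'$. Thus $x_d x_{d'} = x_d d' = x_{dd'}$, and since $\beta(d) = \tau(d')$ every edge of $d$ extends by a monochromatic path through $d'$, giving $\tau(dd') = S$ and $\beta(dd') = V$; hence $x_{dd'} = x_{S,V}$.

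For the case $T \neq U$: since $T$ and $U$ both partition $\{1, \ldots, n\}$ via their entries, $T \neq U$ forces $T_i \neq U_i$ for some $i \in \{1, \ldots, c\}$. If some such $i$ satisfies $T_i \not\subseteq U_i$, then $\tau_i(d'') \subseteq \tau_i(d') = U_i$ for every $d'' \subseteq d'$ shows Proposition \ref{3.1eqcor}'s hypothesis fails in the expansion above, so every summand $x_d d''$ is $\mathbf{0}$. Otherwise $T_i \subseteq U_i$ for all $i$ with strict containment at some index $j$, giving $U_j \not\subseteq T_j$; now expand the other factor as $x_d = \sum_{d''' \subseteq d} (-1)^{\text{size}(d,d''')} d'''$ and apply Proposition \ref{3.1eq}, whose hypothesis $U_i = \tau_i(d') \subseteq \beta_i(d''') \subseteq T_i$ fails at $i = j$, so every summand $d''' x_{d'}$ is $\mathbf{0}$.

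The only real subtlety is recognizing that Propositions \ref{3.1eq} and \ref{3.1eqcor} each detect one direction of the subset containment between $T$ and $U$; the $T \neq U$ case therefore splits according to which inclusion fails, and one must choose the corresponding factor to expand. After that, the computation is a routine bookkeeping exercise.
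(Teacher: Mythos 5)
Your proof is correct and follows essentially the same route as the paper's: expand one factor of $x_{S,T}x_{U,V}$ in the diagram basis, kill all but (at most) one term using Propositions \ref{3.1eq} and \ref{3.1eqcor}, and split the $T\neq U$ case according to which inclusion between $T_i$ and $U_i$ fails. The only cosmetic difference is that in the $T=U$ case you expand the right-hand factor and invoke Proposition \ref{3.1eqcor}, whereas the paper expands the left-hand factor and invokes Proposition \ref{3.1eq}; the two arguments are mirror images.
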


\begin{proof}
Let $d,a\in P_{n,c}$ be the diagrams such that $x_{S,T}=x_d$ and $x_{U,V}=x_a$.

First, suppose $T= U$. By Proposition \ref{3.1eq}, we have $dx_a=x_{da}$. Now let $d'$ be a diagram such that $d'\subset d$. Then $ \tau_{i}(a)=\beta_{i}(d) \not\subseteq \beta_i(d')$ for some $i \in \{1,\ldots, c\}$. Thus $d'x_a={\bf 0}$, and we see
\[x_dx_a=dx_a+\sum_{d'\subset d}(-1)^{\text{size}(d, d')}d'x_a =x_{da}.\]
Since $\tau(da)=S$ and $\beta(da)=V$, we conclude that $x_{S,T}x_{U,V}=x_{S,V}$.

Next suppose $T\neq U$. Then for some $i \in \{1,\ldots, c\}$, it must be that $U_{i} \not\subseteq T_{i}$ or $T_{i} \not\subseteq U_{i}$. If $U_{i}\not\subseteq T_{i}$, then because $\beta_{i}(d^\prime) \subseteq \beta_{i}(d)$ for all $d^\prime \subseteq d$, clearly $\tau_{i}(a) \not \subseteq \beta_{i}(d^\prime)$. Thus, by Proposition \ref{3.1eq}, for all $d^\prime \subseteq d$ we have $d^\prime x_a={\bf 0}$. Therefore $x_dx_a={\bf 0}$, as desired. Similarly, if $T_{i} \not\subseteq U_{i}$, then since $\tau_{i}(a^\prime) \subseteq \tau_{i}(a)$ for all $a^\prime \subseteq a$, we have $\beta_{i}(d) \not \subseteq \tau_{i}(a^\prime)$. It follows from Proposition \ref{3.1eqcor} that 
$x_d a^\prime={\bf 0}$.
Thus, $x_dx_a={\bf 0}$, and we are done. 
\end{proof}

Additionally, we require the following lemma from \cite{curtis} (see chp. 4), restated in terms of terminology used in this paper.  

\begin{lemma} \label{twosided}
Let $I^n_{n_0,\ldots, n_c}=\C$-span$\{x_d ~|~ d \in P_{n,c}, \text{ and }|\beta_i(d)|=n_i \text{ for all } i \in \{0,\ldots, c\}\}$.
Then $I^n_{n_0,\ldots,n_c}$ is an algebra and a two-sided ideal of $\C P_{n,c}$. Moreover, $\C P_{n,c}$ is a direct sum (as an algebra) of all the ideals $I^n_{n_0,\ldots,n_c}$ obtained by letting $n_0,\ldots, n_c$ range over all possible values such that $n_0+\cdots + n_c=n$.
\end{lemma}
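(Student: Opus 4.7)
The plan is to split the lemma into three claims and verify each by reducing to the basis $\{x_d\}$ and invoking the multiplication rules from Propositions \ref{3.1eq}, \ref{3.1eqcor}, and \ref{3.3eq}.

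First I would verify that each $I^n_{n_0,\ldots,n_c}$ is a two-sided ideal of $\C P_{n,c}$ (and in particular closed under multiplication). By linearity it suffices to consider a basis element $x_a$ with $|\beta_i(a)|=n_i$ for all $i$ and an arbitrary diagram $d \in P_{n,c}$. By Proposition \ref{3.1eq}, the product $dx_a$ is either ${\bf 0}$ or $x_{da}$, and in the nonzero case $\beta(da)=\beta(a)$, so $|\beta_i(da)|=n_i$ and $dx_a \in I^n_{n_0,\ldots,n_c}$. Symmetrically, Proposition \ref{3.1eqcor} shows that $x_a d$ is either ${\bf 0}$ or $x_{ad}$; in the nonzero case the inclusion $\beta_i(a) \subseteq \tau_i(d)$ forces each $u_i$-edge of $a$ to continue through a $u_i$-edge of $d$, so $|\beta_i(ad)|=|\beta_i(a)|=n_i$. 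Hence $I^n_{n_0,\ldots,n_c}$ is closed under left and right multiplication by $\C P_{n,c}$.

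Next I would observe that each basis vector $x_d$ of $\C P_{n,c}$ lies in precisely one $I^n_{n_0,\ldots,n_c}$, namely the one indexed by $n_i=|\beta_i(d)|$. Partitioning the basis $\{x_d\mid d\in P_{n,c}\}$ accordingly yields the vector space decomposition $\C P_{n,c}=\bigoplus I^n_{n_0,\ldots,n_c}$, the sum ranging over all tuples of non-negative integers with $n_0+\cdots+n_c=n$. One can also write down the identity of $I^n_{n_0,\ldots,n_c}$ explicitly as $\sum_S x_{S,S}$, summed over those $S\in X_n$ with $|S_i|=n_i$ for all $i$; Proposition \ref{3.3eq} shows this element acts as a two-sided unit on the ideal, so each $I^n_{n_0,\ldots,n_c}$ is a (unital) algebra in its own right.

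Finally, to upgrade the vector space decomposition to a direct sum \emph{as algebras}, I need the cross products to vanish. Given $x_{S,T}\in I^n_{n_0,\ldots,n_c}$ and $x_{U,V}\in I^n_{m_0,\ldots,m_c}$ with $(n_0,\ldots,n_c)\neq (m_0,\ldots,m_c)$, we have $|T_i|=n_i\neq m_i=|U_i|$ for some index $i$. In particular $T\neq U$, so Proposition \ref{3.3eq} gives $x_{S,T}x_{U,V}={\bf 0}$, completing the algebra direct sum decomposition. The only subtle point is the bookkeeping in the right-multiplication step: one must check that the subset condition $\beta_i(a)\subseteq \tau_i(d)$ genuinely preserves the bottom-row color counts, but this is immediate once one recalls that every edge in a planar rook diagram contributes one endpoint of its color to the top and one to the bottom.
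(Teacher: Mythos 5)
Your proof is correct, but it does not parallel anything in the text: the paper gives no argument for Lemma \ref{twosided} at all, quoting it from \cite{curtis} as an instance of the general block decomposition of a semisimple algebra, ``restated in terms of terminology used in this paper.'' What you have written is a direct, self-contained verification, and it is arguably the more honest route, since the citation by itself does not establish that these \emph{particular} combinatorially defined spans are two-sided ideals --- that requires exactly the computation you perform, namely that a nonzero product $dx_a$ or $x_ad$ preserves each count $|\beta_i|$ (for left multiplication because $\beta(da)=\beta(a)$ outright, as noted in the proof of Proposition \ref{3.1eq}; for right multiplication because the inclusion $\beta_i(a)\subseteq\tau_i(d)$ extends every $u_i$-edge of $a$ through $d$, preserving the number of $u_i$-edges, with the $i=0$ count then forced by $|\beta_0|=n-\sum_{i\geq 1}|\beta_i|$). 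Your unit $\sum_S x_{S,S}$ and the vanishing of cross products both follow correctly from Proposition \ref{3.3eq}, which is the same matrix-unit behavior the paper later exploits in Theorem \ref{matrixiso}. In short: the paper's appeal to general semisimple structure theory buys brevity (semisimplicity is already known from Theorem \ref{main} and Theorem \ref{artin}); your approach buys an explicit identification of the ideals and their units using only the multiplication rules already proved, and would be a legitimate replacement for the citation.
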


Finally, we are ready to prove one of our main results. By comparing the behavior of the $x_{S,T}$ to that of elementary matrices, we find the following structure of $\C P_{n,c}$:

\begin{theorem}\label{matrixiso}
\[\C P_{n,c} \cong \bigoplus \limits_{n_0+\cdots + n_c =n} \text{Mat}\left( {n \choose n_0, \ldots , n_c}, {n \choose n_0 ,\ldots ,n_c} \right),\]

\noindent where Mat$(m,m)$ is the algebra of all $m \times m$ matrices with complex entries.
\end{theorem}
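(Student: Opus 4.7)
The plan is to combine Lemma \ref{twosided} with Proposition \ref{3.3eq} to identify each ideal $I^n_{n_0,\ldots,n_c}$ with a matrix algebra. Since Lemma \ref{twosided} already furnishes an algebra-direct-sum decomposition $\C P_{n,c} = \bigoplus_{n_0+\cdots+n_c=n} I^n_{n_0,\ldots,n_c}$, it suffices to prove that for each tuple $(n_0,\ldots,n_c)$ with $n_0+\cdots+n_c=n$,
\[I^n_{n_0,\ldots,n_c} \;\cong\; \mathrm{Mat}\!\left(\tbinom{n}{n_0,\ldots,n_c},\,\tbinom{n}{n_0,\ldots,n_c}\right).\]

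First I would fix such a tuple and set $X_n^{(n_0,\ldots,n_c)} = \{T\in X_n : |T_i|=n_i \text{ for all } i\in\{0,\ldots,c\}\}$. The key observation is that the spanning set $\{x_d : d\in P_{n,c},\ |\beta_i(d)|=n_i\}$ for $I^n_{n_0,\ldots,n_c}$ is precisely $\{x_{S,T} : S,T \in X_n^{(n_0,\ldots,n_c)}\}$: for any rook diagram $d$, the number of edges of color $u_i$ at the top equals the number at the bottom, so $|\tau_i(d)|=|\beta_i(d)|=n_i$ automatically, and the parameters $S=\tau(d)$, $T=\beta(d)$ determine $d$ uniquely by planarity. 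A standard multinomial count shows $|X_n^{(n_0,\ldots,n_c)}| = \binom{n}{n_0,\ldots,n_c}$.

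Next, I would index the rows and columns of the matrix algebra $\mathrm{Mat}(\binom{n}{n_0,\ldots,n_c},\binom{n}{n_0,\ldots,n_c})$ by the elements of $X_n^{(n_0,\ldots,n_c)}$ and define a linear map $\phi:I^n_{n_0,\ldots,n_c} \to \mathrm{Mat}(\binom{n}{n_0,\ldots,n_c},\binom{n}{n_0,\ldots,n_c})$ on the basis by $\phi(x_{S,T}) = E_{S,T}$, where $E_{S,T}$ is the standard matrix unit. By construction $\phi$ is a bijection on basis elements, hence a linear isomorphism. Proposition \ref{3.3eq} shows it is an algebra homomorphism: both $\phi(x_{S,T}x_{U,V})$ and $E_{S,T}E_{U,V}$ equal $E_{S,V}$ when $T=U$ and vanish otherwise. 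One still needs to check that $\phi$ sends the multiplicative identity of $I^n_{n_0,\ldots,n_c}$ (which is a certain sum $\sum_{T\in X_n^{(n_0,\ldots,n_c)}} x_{T,T}$) to the identity matrix, but this is immediate from the same proposition.

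Finally, taking the direct sum over all tuples $(n_0,\ldots,n_c)$ with $n_0+\cdots+n_c=n$ yields the stated isomorphism. The only genuine obstacle is the bookkeeping in the first step, namely verifying that the spanning description of $I^n_{n_0,\ldots,n_c}$ provided by Lemma \ref{twosided} coincides with the $x_{S,T}$ indexed by $X_n^{(n_0,\ldots,n_c)}\times X_n^{(n_0,\ldots,n_c)}$; once that is in place, Proposition \ref{3.3eq} does all the algebraic work for free.
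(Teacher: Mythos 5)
Your proposal is correct and follows essentially the same route as the paper: decompose via Lemma \ref{twosided}, then use Proposition \ref{3.3eq} to match the basis elements $x_{S,T}$ of each ideal $I^n_{n_0,\ldots,n_c}$ with elementary matrices $E_{i,j}$. The extra bookkeeping you supply (that $|\tau_i(d)|=|\beta_i(d)|$ forces the spanning set to be exactly the $x_{S,T}$ with $S,T\in X_n^{(n_0,\ldots,n_c)}$, and that the identity maps to the identity) is correct and slightly more careful than the published argument, but does not change the approach.
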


\begin{proof}
By Lemma \ref{twosided}, it follows that $\C P_{n,c} = \bigoplus \limits_{n_0+\cdots +n_c=n}I^n_{n_0,\ldots,n_c}$ as an algebra. 

Fix $n_0,\ldots , n_c$. We now will show that $I^n_{n_0,\ldots, n_c}$ and Mat$\left( {n \choose n_0,\ldots,n_c},{n \choose n_0,\ldots,n_c}\right)$ are isomorphic algebras. 

To each element $S \in X_n$ such that $|S_i|=n_i$ for all $i \in \{0,\ldots, c\}$, assign a unique number in $\{1,\ldots , \binom{n}{n_0,\ldots , n_c}\}$. Now consider the bijection $\phi: I_{n_0,\ldots , n_c}\rightarrow \text{ Mat} \left( \binom{n}{n_0,\ldots, n_c},\binom{n}{n_0,\ldots , n_c} \right) $ given by $\phi(x_{S,T})=E_{i,j}$, where $i$ and $j$ are the index values of $S$ and $T$ respectively and where $E_{i,j}$ is the elementary matrix of $\text{ Mat}\left( \binom{n}{n_0,\ldots, n_c},\binom{n}{n_0,\ldots , n_c}\right)$ with a $1$ in the $(i,j)$-entry and zeros elsewhere. 

Now choose $x_{S,T},x_{U,V}\in I^n_{n_0,\ldots,n_c}$ arbitrarily. First suppose that $T=U$. Then by Proposition \ref{3.3eq}, $\phi(x_{S,T}x_{U,V})=\phi(x_{S,V})=E_{i,k}$. Similarly, $\phi(x_{S,T})\phi(x_{U,V})=E_{i,j}E_{j,k}=E_{i,k}$, by properties of multiplication of elementary matrices.

Next suppose $T\neq U$. Then $\phi(x_{S,T}x_{U,V})=\phi({\bf 0})=\left(\begin{array}{ccc} 0& \cdots & 0 \\
\vdots & \ddots & \vdots \\
0 & \cdots & 0 \\
\end{array} \right) 
=E_{i,j}E_{\ell,k}=\phi(x_{S,T})\phi(x_{U,V})$. Thus, $I_{n_0,\ldots , n_c}$ and $\text{ Mat}\left(\binom{n}{n_0,\ldots, n_c},\binom{n}{n_0,\ldots , n_c}\right)$ are isomorphic algebras. It follows that $\C P_{n,c} = \bigoplus\limits_{n_0+\cdots +n_c=n} I_{n_0,\ldots , n_c}\cong \bigoplus \limits_{n_0+\cdots + n_c =n} \text{Mat}\left( {n \choose n_0, \ldots , n_c}, {n \choose n_0 ,\ldots ,n_c} \right)$, as desired.

\end{proof}

\bigskip


\section{The Bratteli Diagram and Pascal's Simplex}\label{section bratteli}

Throughout this section, we fix $n \geq 1$. We will embed $\C P_{n-1,c}$ into $\C P_{n,c}$ to create a tower of algebras, $\C P_{0,c} \subseteq \C P_{1,c} \subseteq \C P_{2,c} \subseteq \cdots$. We will then create the corresponding Bratteli diagram and make a connection to a common combinatorial identity for multinomial coefficients.

Let $i\in \{0,\ldots , c\}$. For $i \not =0$ define $I_i$ to be a single vertical edge colored $u_i$, and let $I_0=\raisebox{-6pt}{\begin{tikzpicture}
  [scale=.15,auto=left,every node/.style={circle,fill=black!100,inner sep=0pt, minimum size=4.5pt}]
  \node (n1) at (4,9)  {};
  \node (n2) at (4,6)  {\tiny $ $};
   \foreach \from/\to in {}
    \draw [thick, color=black](\from) -- (\to);
    \end{tikzpicture}}$ be a single pair of isolated vertices. Then for $d\in P_{n-1,c}$, let $d\otimes I_i$ be the diagram in $P_{n,c}$ with $I_i$ concatenated to the right end of $d$. Extending this operation, for $g \in \C P_{n-1,c}$, let $g\otimes I_i$ be the diagram in $\C P_{n,c}$ with $I_i$ concatenated to the right end of each term in $g$.
We can use this concatenation to embed $\C P_{n-1,c}$ into $\C P_{n,c}$ via the embedding function $\gamma : \C P_{n-1,c}\rightarrow \C P_{n,c}$ given by 

\begin{equation} \label{embfunc}
\gamma(g)= \left(\sum_{i=1}^c g\otimes I_i\right) -(c-1)(g \otimes I_0).
\end{equation} 

For example, we embed an element $\C P_{2,2}$ into $\C P_{3,2}$ as follows:
\vspace{-1pt}
$$
\raisebox{-5pt}{\begin{tikzpicture}
  [scale=.15,auto=left,every node/.style={circle,fill=black!100,inner sep=0pt, minimum size=4.5pt}]
  \node (n1) at (4,9)  {};
  \node (n2) at (4,6)  {\tiny $ $};
  \node (n3) at (7,9) {\tiny $ $};
  \node (n4) at (7,6)  {\tiny $ $};
  \foreach \from/\to in {n2/n3}
    \draw [thick, color=black](\from) -- (\to);
\foreach \from/\to in {n1/n4}
    \draw [dotted,thick, color=black](\from) -- (\to);
    \end{tikzpicture}}
  \raisebox{0pt}{ $+$ 5}
\raisebox{-5pt}{\begin{tikzpicture}
  [scale=.15,auto=left,every node/.style={circle,fill=black!100,inner sep=0pt, minimum size=4.5pt}]
  \node (n1) at (4,9)  {};
  \node (n2) at (4,6)  {\tiny $ $};
  \node (n3) at (7,9) {\tiny $ $};
  \node (n4) at (7,6)  {\tiny $ $};
  \foreach \from/\to in {n2/n1}
    \draw [thick, color=black](\from) -- (\to);
    \end{tikzpicture}}
\raisebox{0pt}{   $~\mapsto~$ }
\left(
\raisebox{-5pt}{\begin{tikzpicture}
  [scale=.15,auto=left,every node/.style={circle,fill=black!100,inner sep=0pt, minimum size=4.5pt}]
  \node (n1) at (4,9)  {};
  \node (n2) at (4,6)  {\tiny $ $};
  \node (n3) at (7,9) {\tiny $ $};
  \node (n4) at (7,6)  {\tiny $ $};
  \node (n5) at (10,9) {\tiny $ $};
  \node (n6) at (10,6) {\tiny $ $};
 \foreach \from/\to in {n2/n3,n5/n6}
    \draw [thick, color=black](\from) -- (\to);
\foreach \from/\to in {n1/n4}
    \draw [dotted, thick,color=black](\from) -- (\to);
    \end{tikzpicture}}
\raisebox{0pt}{ $+$ 5}
\raisebox{-5pt}{\begin{tikzpicture}
  [scale=.15,auto=left,every node/.style={circle,fill=black!100,inner sep=0pt, minimum size=4.5pt}]
  \node (n1) at (4,9)  {};
  \node (n2) at (4,6)  {\tiny $ $};
  \node (n3) at (7,9) {\tiny $ $};
  \node (n4) at (7,6)  {\tiny $ $};
  \node (n5) at (10,9) {\tiny $ $};
  \node (n6) at (10,6) {\tiny $ $};
  \foreach \from/\to in {n2/n1,n5/n6}
    \draw [thick, color=black](\from) -- (\to);
    \end{tikzpicture}}
\right)
\raisebox{0pt}{ $+$ }
\left(
\raisebox{-5pt}{\begin{tikzpicture}
  [scale=.15,auto=left,every node/.style={circle,fill=black!100,inner sep=0pt, minimum size=4.5pt}]
  \node (n1) at (4,9)  {};
  \node (n2) at (4,6)  {\tiny $ $};
  \node (n3) at (7,9) {\tiny $ $};
  \node (n4) at (7,6)  {\tiny $ $};
  \node (n5) at (10,9) {\tiny $ $};
  \node (n6) at (10,6) {\tiny $ $};
 \foreach \from/\to in {n2/n3}
    \draw [thick, color=black](\from) -- (\to);
\foreach \from/\to in {n1/n4,n5/n6}
    \draw [dotted, thick,color=black](\from) -- (\to);
    \end{tikzpicture}}
    \raisebox{0pt}{ $+$ 5}
   \raisebox{-5pt}{ \begin{tikzpicture}
  [scale=.15,auto=left,every node/.style={circle,fill=black!100,inner sep=0pt, minimum size=4.5pt}]
  \node (n1) at (4,9)  {};
  \node (n2) at (4,6)  {\tiny $ $};
  \node (n3) at (7,9) {\tiny $ $};
  \node (n4) at (7,6)  {\tiny $ $};
  \node (n5) at (10,9) {\tiny $ $};
  \node (n6) at (10,6) {\tiny $ $};
  \foreach \from/\to in {n2/n1}
    \draw [thick, color=black](\from) -- (\to);
    \foreach \from/\to in {n5/n6}
    \draw [dotted, thick, color=black](\from) -- (\to);
    \end{tikzpicture}}
\right)
\raisebox{0pt}{ $-1$}
\left(
\raisebox{-5pt}{\begin{tikzpicture}
  [scale=.15,auto=left,every node/.style={circle,fill=black!100,inner sep=0pt, minimum size=4.5pt}]
  \node (n1) at (4,9)  {};
  \node (n2) at (4,6)  {\tiny $ $};
  \node (n3) at (7,9) {\tiny $ $};
  \node (n4) at (7,6)  {\tiny $ $};
  \node (n5) at (10,9) {\tiny $ $};
  \node (n6) at (10,6) {\tiny $ $};
 \foreach \from/\to in {n2/n3}
    \draw [thick, color=black](\from) -- (\to);
\foreach \from/\to in {n1/n4}
    \draw [dotted, thick,color=black](\from) -- (\to);
    \end{tikzpicture}}
\raisebox{0pt}{ $+5$}
\raisebox{-5pt}{\begin{tikzpicture}
 [scale=.15,auto=left,every node/.style={circle,fill=black!100,inner sep=0pt, minimum size=4.5pt}]
 \node (n1) at (4,9)  {};
 \node (n2) at (4,6)  {\tiny $ $};
 \node (n3) at (7,9) {\tiny $ $};
 \node (n4) at (7,6)  {\tiny $ $};
 \node (n5) at (10,9) {\tiny $ $};
 \node (n6) at (10,6) {\tiny $ $};
 \foreach \from/\to in {n2/n1}
   \draw [thick, color=black](\from) -- (\to);
    \foreach \from/\to in {}
    \draw [dotted, thick,color=black](\from) -- (\to);
    \end{tikzpicture}}
\right)  .
$$

\noindent Furthermore, if $g_1 \in \C P_{n-1,c}$ and $g_2 \in \C P_{n,c}$ define 
$$g_1g_2=\gamma(g_1)g_2.$$ 

Now that we have embedded $\C P_{n-1,c}$ into $\C P_{n,c}$,  we will show connections between the irreducible modules of $\C P_{n,c}$ and $\C P_{n-1,c}$. 
We first show that the irreducible $\C P_{n,c}$-module $W^n_T$ is a reducible $\C P_{n-1,c}$-module by constructing the following subspaces of $W^n_T$.

For each $j\in \{0,\ldots , c\}$ and $T \in X_n$ define
$$ \hat {W}^{n}_{T,j}=\C\text{-span}\{x_a ~|~ a \in P_{n,c},\beta(a)=T,\text{ and }n\in \tau_j(a)\},$$
\noindent with the usual convention that the $\C$-span of the empty set is the zero vector space. 

We know by the Artin-Weddernburn theory of semisimple algebras (Theorem \ref{artin}) that $\hat{W}^n_{T,j}$ is an irreducible representation of $\C P_{n-1,c}$ if and only if $\hat{W}^n_{T,j}$ is isomorphic to a direct summand of the regular representation of $\C P_{n-1,c}$ in Theorem \ref{main}. With this in mind, we set out to prove the following:

\begin{theorem} \label{restrict}
Fix $j \in \{0,\ldots, c\}$. Let $T \in X_n$ such that $T_j \not = \emptyset$. Then $\hat{W}^n_{T,j}$ and $W^{n-1}_{|T_0|,\ldots,|T_j|-1,\ldots,|T_c|}$ are isomorphic as $\C P_{n-1,c}$-modules.
\end{theorem}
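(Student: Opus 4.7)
The plan is to build an explicit $\C P_{n-1,c}$-module isomorphism $\Phi\colon \hat{W}^n_{T,j}\to W^{n-1}_{\bar{T}}$, where $\bar{T}\in X_{n-1}$ is a representative of the isomorphism class $W^{n-1}_{|T_0|,\ldots,|T_j|-1,\ldots,|T_c|}$ obtained from $T$ by removing a single bottom vertex of type $j$ and re-indexing. The map will send a basis element $x_a$ to $x_{\tilde{a}}$, where $\tilde{a}\in P_{n-1,c}$ is the ``amputation'' of $a$ obtained by deleting the top vertex $n$, a distinguished bottom vertex $m$, and (when $j\geq 1$) the edge joining them.

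First I would specify $m$ case by case. For $j\geq 1$, planarity forces the $u_j$-edge incident to the top vertex $n$ in each spanning diagram $a$ to land on the rightmost element of $T_j$, so I take $m:=\max(T_j)$ and remove it together with that edge. For $j=0$, the top vertex $n$ is isolated in every such $a$, and I pick any fixed $m\in T_0$ (say $m:=\max(T_0)$); because $m$ is isolated in every such $a$, deleting it destroys no edges. In both situations $a\mapsto \tilde{a}$ is a bijection between $\{a\in P_{n,c}\,:\,\beta(a)=T,\,n\in\tau_j(a)\}$ and $\{b\in P_{n-1,c}\,:\,\beta(b)=\bar{T}\}$, with inverse given by re-inserting the top vertex $n$ and a bottom vertex at position $m$ together with (if $j\geq 1$) the unique planar $u_j$-edge between them. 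Extending linearly, $\Phi$ is a vector-space isomorphism.

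The heart of the argument will be verifying $\Phi(\gamma(d)\,x_a)=d\,\Phi(x_a)$ for every $d\in P_{n-1,c}$ and every basis vector $x_a$. Fix such $d$ and $a$ and expand $\gamma(d)\,x_a=\sum_{i=1}^{c}(d\otimes I_i)\,x_a-(c-1)(d\otimes I_0)\,x_a$ via Proposition \ref{3.1eq}. When $j\geq 1$, the condition $\tau_j(a)\subseteq \beta_j(d\otimes I_i)$ fails for $i\neq j$ because $n\in\tau_j(a)$ but $n\notin \beta_j(d\otimes I_i)$, so only the $i=j$ summand survives and the remaining condition reduces to $\tau_k(\tilde{a})\subseteq \beta_k(d)$ for all $k\in\{1,\ldots,c\}$, which is exactly what is needed for $d\,x_{\tilde{a}}=x_{d\tilde{a}}$. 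When $j=0$, the top vertex $n$ of $a$ is isolated; consequently no monochromatic path in $(d\otimes I_i)\,a$ can pass through the rightmost column, and the diagrams $(d\otimes I_i)\,a$ coincide for every $i\in\{0,\ldots,c\}$. The arithmetic $\sum_{i=1}^{c}1-(c-1)=1$ then collapses $\gamma(d)\,x_a$ into a single term governed once again by the condition $\tau_k(\tilde{a})\subseteq \beta_k(d)$. In either case a direct diagrammatic comparison will show that the amputation of the surviving product at top $n$ and bottom $m$ equals $d\,\tilde{a}$, so $\Phi(\gamma(d)\,x_a)=d\,\Phi(x_a)$.

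The main obstacle is the $j=0$ case, where all $c+1$ summands of $\gamma(d)$ a priori contribute: one must carefully check both that the products $(d\otimes I_i)\,a$ genuinely do not depend on $i$ and that the correction coefficient $-(c-1)$ is tuned exactly to collapse the $c+1$ copies into a single clean summand. By contrast, the $j\geq 1$ case is cleaner because only one term of $\gamma(d)$ survives for purely set-theoretic reasons, making the intertwining identity nearly automatic once $\tilde{a}$ has been defined correctly.
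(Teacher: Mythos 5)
Your proposal is correct and follows essentially the same route as the paper: you define the intertwiner on the basis $\{x_a\}$ by deleting the top vertex $n$ (and, for $j\geq 1$, its forced edge to $\max(T_j)$), which is exactly the paper's map $x_a\mapsto x_{a'}$ with a particular choice of $T'\in X_{n-1}$, and you verify equivariance by the same case split — for $j\geq 1$ all summands of $\gamma(d)$ except $d\otimes I_j$ annihilate $x_a$ by Proposition \ref{3.1eq}, while for $j=0$ the $c+1$ products coincide and the coefficients $c-(c-1)=1$ collapse the sum. No gaps worth noting.
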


\begin{proof}
Choose $T' \in X_{n-1}$ such that $|T'_j|=|T_j|-1$ and $|T_i'|=|T_i|$ for all $i \in \{0,\ldots,c\}\setminus \{j\}$. Because $W^{n-1}_{T'}$ is in the isomorphism class represented by $W^{n-1}_{|T_0|,\ldots,|T_j|-1,\ldots,|T_c|}$, we need only show that $\hat{W}^n_{T,j} \cong W^{n-1}_{T'}$ as $\C P_{n-1,c}$-modules.

Consider the bijection $\phi: \hat{W}^n_{T,j} \rightarrow W^{n-1}_{T'}$ given by $\phi(x_a)=x_{a'}$, where $a'$ is the unique planar diagram with $\beta(a')=T'$ and $\tau(a')=(\tau_0(a),\ldots, \tau_j(a)\setminus \{n\}, \ldots, \tau_c(a))$.

Fix $d \in P_{n-1,c}$ and $x_a \in \hat{W}^n_{T,j}$ arbitrarily. We must show that $$d\phi(x_a)=\phi(\gamma(d)x_a)=\phi \left( \sum \limits_{i=1}^{c}(d \otimes I_i)x_a-(c-1)(d\otimes I_0)x_a \right).$$

First, suppose $j \not = 0$. Fix $k \in \{0,\ldots, c\}\setminus \{j\}$. Now $n \in \beta_k(d \otimes I_k)$, so $ n \not \in \beta_j(d \otimes I_k)$. Since $n \in \tau_j(a)$, this means that $\tau_j(a) \not \subseteq \beta_j(d \otimes I_k)$. Thus, by Theorem \ref{3.1eq}, we have $(d \otimes I_k)x_a={\bf 0}$. So $$\phi(\gamma(d)x_a )= \phi((d \otimes I_j)x_a)=dx_{a'}=d\phi(x_a),$$
\noindent as desired.  

Next, suppose $j=0$. 
Observe that $(d \otimes I_{k_1})x_a=(d \otimes I_{k_2})x_a$ for all $k_1,k_2 \in \{0,\ldots, c\}$. Thus, 
\[\phi(\gamma(d)x_a)=\phi\left(\sum \limits_{i=1}^c (d\otimes I_i)x_{a}-(c-1)(d\otimes I_0)x_{a}\right)=\phi((d\otimes I_1)x_{a})=dx_{a'}= d \phi(x_a).\] 
\end{proof}

Now for $T \in X_n$, if $T_j=\emptyset$, then $\hat{W}^n_{T,j}=\{{\bf 0}\}$. Combining this observation with Theorem \ref{restrict} above, we see that 
\begin{equation} \label{direct}
W^n_{T}= \bigoplus \limits_{j=0}^c \hat{W}^n_{T,j} \cong \bigoplus \limits_{j=0}^c W^{n-1}_{|T_0|,\ldots, |T_j|-1,\ldots, |T_c|},
\end{equation}

\noindent where  for each $j\in\{0,\ldots , c\}$, we drop the summand $W^{n-1}_{|T_{0}|,\ldots , |T_{j}|-1,\ldots ,|T_{c}|}$ if $T_{j}=\emptyset$.

Equation \eqref{direct} shows that $W^n_T$ and $W^n_S$ are isomorphic as $\C P_{n-1,c}$-modules if and only if $|T_i|=|S_i|$ for all $i \in \{0,\ldots, c\}$. So, $W^n_{n_0,\ldots,n_c}$ now also denotes an isomorphism class representative of the set $\{W^n_T~|~|T_i|=n_i \text{ for all } i \in \{0,\ldots,c\}\}$ of isomorphic $\C P_{n-1,c}$-modules (see Definition \ref{defisoclass}). Theorem \ref{irred} showed that $W^{n}_{n_0, \ldots, n_c}$ is an irreducible $\C P_{n,c}$-module, and we now see it breaks up further into a direct sum of irreducible $\C P_{n-1,c}$-modules:
\begin{equation} \label{restriction} 
W^n_{n_0, \ldots, n_c}\cong \bigoplus \limits_{j=0}^c W^{n-1}_{n_0,\ldots, n_j-1,\ldots, n_c},
\end{equation}

\noindent where  for each $j\in\{0,\ldots , c\}$, we drop the summand $W^{n-1}_{n_0,\ldots, n_j-1,\ldots,n_c}$ if $n_j=0$.

We are now ready describe the Bratteli diagram of the chain $\C P_{0,c} \subseteq \C P_{1,c} \subseteq \C P_{2,c} \subseteq \cdots$ and make a connection to a common combinatorial identity for multinomial coefficients.

For a fixed $c \geq 1$, the \textit{Bratteli diagram} of the chain $\C P_{0,c} \subseteq \C P_{1,c} \subseteq \C P_{2,c} \subseteq \cdots$ is the infinite rooted tree 
with vertex set given by $\bigcup \limits_{n \geq 0} \{W^n_{n_0,\ldots,n_c} ~|~ n_0+\cdots +n_c=n\}$, the set of all irreducible representations of $\C P_{n,c}$ for all $n \geq 0$ up to isomorphism. There is an edge between vertices $W^{n}_{n_0,\ldots n_c}$ and $W^{n-1}_{m_0,\ldots , m_c}$ if and only if the latter is one of the summands in Equation \eqref{restriction} when we consider $W^{n}_{n_0,\ldots , n_c}$ as a $\C P_{n-1,c}$-module. 

In \cite{flath}, Flath, Halverson, and Herbig 
show that the dimensions of the vertices in the Bratelli diagram of $\C P_{0,1} \subseteq \C P_{1,1} \subseteq \C P_{2,1} \subseteq \cdots$ of single-colored planar rook algebras give Pascal's triangle. By coloring diagrams with two colors, we extend the results of Flath, Halverson, and Herbig and find that the dimensions of the vertices of the  Bratteli diagram of $\C P_{0,2} \subseteq \C P_{1,2} \subseteq \C P_{2,2} \subseteq \cdots$ give Pascal's tetrahedron, as shown in Figure \ref{pascaltet}. In general, the Bratteli diagram of $\C P_{0,c} \subseteq \C P_{1,c} \subseteq \C P_{2,c} \subseteq \cdots$ yields Pascal's $(c+1)$-dimensional simplex. 

\begin{figure}[h] \caption{The Bratteli diagram of $\C P_{0,2} \subseteq \C P_{1,2} \subseteq \C P_{2,2} \subseteq \cdots$ gives Pascal's tetrahedron, the first three levels of which are drawn below.}
\begin{center} \label{pascaltet}
\begin{tikzpicture}
[scale=.30,auto=left,every node/.style={circle, inner sep=-2.5pt,minimum size=5pt}]
  
  \node (n2) at (13.5,24.5) {\tiny{${W^2_{2,0,0}}$}};
  \node (n5) at (8,16.5) {\tiny{${W^2_{1,1,0}}$}};
  \node (n6) at (19,16.5) {\tiny{${W^2_{1,0,1}}$}};  
  \node (m3) at (22,15.5) {};
  \node (n10) at (4.5,7.5) {\tiny{${W^2_{0,2,0}}$}};
  \node (n11) at (13.5,7.5) {\tiny{${W^2_{0,1,1}}$}};  
  \node (n12) at (22.5,7.5) {\tiny{${W^2_{0,0,2}}$}};
    \node (n17) at (13.5,20) {\tiny{$W^1_{1,0,0}$}};
    \node (n18) at (9,11) {\tiny{$W^1_{0,1,0}$}};
    \node(n19) at (18,11) {\tiny{$W^1_{0,0,1}$}};
    \node(n20) at (13.5,14.5) {\tiny{$W^0_{0,0,0}$}};         
   
\foreach \from/\to in
{n17/n2,n17/n5,n17/n6,n19/n6,n19/n12,n19/n11,n18/n5,n18/n10,n18/n11,n20/n17,n20/n19,n20/n18} 
\draw [thick](\from) -- (\to); 

\end{tikzpicture}
\end{center}
\end{figure}
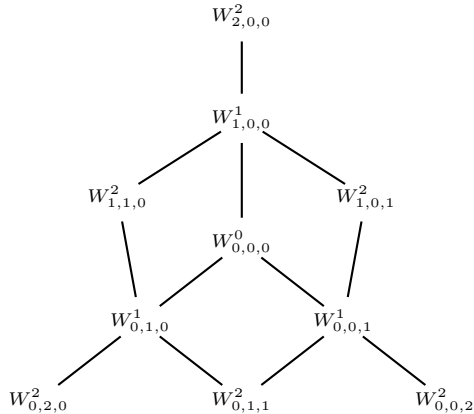

We note that not only do elements of Pascal's simplex appear in our Bratteli diagrams, but the edges of our Bratelli diagrams indicate the recursive relationship between vertices in the $n^{th}$ and $(n-1)^{th}$ level of Pascal's simplex. Taking the dimensions of both sides of Equation \eqref{restriction}, we find another proof of the following well known combinatorial identity for multinomial coefficients:

$$\binom{n}{n_0,n_1, \ldots, n_c}=\binom{n-1}{n_0-1,n_1, \ldots,n_c}+\binom{n-1}{n_0,n_1-1, \ldots, n_c}+\cdots +\binom{n-1}{n_0,n_1, \ldots, n_c-1}.$$
 
\

For large values of $c$, the Bratteli diagram of $\C P_{0,c} \subseteq \C P_{1,c} \subseteq \C P_{2,c} \subseteq \cdots $ can be difficult to visualize. Thus we now offer a description of its structure. 

Let the $n^{th}$ level of the Bratelli diagram of  $\C P_{0,c} \subseteq \C P_{1,c} \subseteq \C P_{2,c} \subseteq \cdots $ be the set of all vertices of the form $W^n_{n_0,\ldots, n_c}$. 
The number of vertices in the $n^{th}$ level is equal to 
$|\{W^n_{n_0,\ldots,n_c}~|~ n_0+\cdots + n_c=n\}|$, which is equal to ${n+c \choose n}$,  the number of nonnegative integer solutions to $n_0 + \cdots + n_c =n$. Figure \ref{bc} gives this number for various values of $n$ and $c$. Notice the presence of the rows of Pascal's triangle along the skew-diagonal in the given table.

\begin{figure}[h]
\caption{ The number of vertices in the $n^{th}$ level of the Bratteli diagram of $\C P_{0,c} \subseteq \C P_{1,c} \subseteq \C P_{2,c} \subseteq \cdots$ gives rise to Pascal's triangle.} 
\label{bc}

$$\begin{array}{|c||c|c|c|c|c|c|c|}
\hline
c=&0&1&2&3&4&\cdots \\
\hline
\hline
n=0 & 1&1&1&1&1&\cdots\\
\hline
n=1&1 & 2&3&4&5&\cdots\\
\hline
n=2&1&3&6&10&15&\cdots\\
\hline
n=3&1&4 &10&20&35&\cdots\\
\hline
n=4&1&5&15&35&70&\cdots\\
\hline
\vdots & \vdots&\vdots &\vdots &\vdots & \vdots&\vdots\\
\hline
\end{array}$$
\end{figure}
The following proposition gives information about the connectivity of a given Bratteli diagram.

\begin{proposition}
Let $n,x\geq 1$. In the Bratteli diagram of $\C P_{0,c} \subseteq \C P_{1,c} \subseteq \C P_{2,c} \subseteq \cdots$ the number of of vertices in the $n^{th}$ level adjacent to exactly $x$ vertices in the $(n-1)^{th}$ level is ${c+1 \choose x}{n-1 \choose x-1}$.
\end{proposition}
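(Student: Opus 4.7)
The plan is to read off the adjacency structure directly from the recursion in Equation \eqref{restriction} and then reduce the problem to a standard compositions-count.

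First, I would fix a vertex $W^{n}_{n_0,\ldots,n_c}$ in the $n^{th}$ level and identify its neighbors in the $(n-1)^{th}$ level. By Equation \eqref{restriction}, these neighbors are precisely the modules $W^{n-1}_{n_0,\ldots,n_j-1,\ldots,n_c}$ where $j$ ranges over indices with $n_j \geq 1$. A small but essential observation is that distinct such indices $j$ yield distinct neighbors: as ordered $(c+1)$-tuples, subtracting $1$ from position $j$ versus position $k \neq j$ produces tuples that differ in positions $j$ and $k$, so they index different vertices of the Bratteli diagram even when $n_j = n_k$. Therefore the number of neighbors of $W^{n}_{n_0,\ldots,n_c}$ in level $n-1$ is exactly the number of nonzero entries in the tuple $(n_0,\ldots,n_c)$.

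Second, I would count how many tuples $(n_0,\ldots,n_c)$ of nonnegative integers satisfying $n_0+\cdots+n_c=n$ have exactly $x$ nonzero entries. The count splits into two independent choices: choose which $x$ of the $c+1$ coordinates are nonzero, which can be done in $\binom{c+1}{x}$ ways, and then count ordered $x$-tuples of \emph{positive} integers summing to $n$. The latter is a standard compositions count: substituting $b_i = a_i-1$ converts it into the number of nonnegative integer solutions to $b_1+\cdots+b_x = n-x$, which by stars and bars is $\binom{n-1}{x-1}$. Multiplying gives $\binom{c+1}{x}\binom{n-1}{x-1}$.

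There is no real obstacle here; the content is essentially a bookkeeping argument combining Equation \eqref{restriction} with a compositions count. The only point that requires a moment's care is verifying that the map from indices $j$ with $n_j \geq 1$ to neighbors in level $n-1$ is injective, so that the neighbor count really equals the number of nonzero coordinates rather than the number of distinct values among them. Once that is settled, the formula $\binom{c+1}{x}\binom{n-1}{x-1}$ falls out immediately.
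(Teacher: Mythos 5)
Your proof is correct and follows essentially the same route as the paper: neighbors in level $n-1$ correspond to the nonzero coordinates of $(n_0,\ldots,n_c)$, and the count reduces to choosing $x$ of the $c+1$ coordinates times the number of compositions of $n$ into $x$ positive parts. Your extra remark verifying that distinct indices $j$ give distinct neighbor vertices is a small point the paper leaves implicit, but the argument is otherwise identical.
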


\begin{proof}  
The vertex $W^n_{n_0,\ldots, n_c}$
is adjacent to $x$ vertices if and only if the number of nonzero entries in $\{n_0,\ldots,n_c\}$ is $x$. There are ${c+1 \choose x}$ ways to choose $x$ of the $n_i$ to be nonzero. Next we count the number of ways to assign values to the $x$ chosen $n_i$ such that their sum equals $n$. This is equivalent to counting the number of positive integer solutions to $n_{i_1}+n_{i_2}+\cdots + n_{i_x}=n$, which is ${n-1 \choose x-1}$. Thus the total number of vertices in the $n^{th}$ level that are adjacent to $x$ vertices in the $(n-1)^{th}$ level is ${c+1 \choose x}{n-1\choose x-1}$.
\end{proof}

\bigskip

\section{Character Table} \label{section character}

The character of $W^n_{n_0,\ldots, n_c}$, denoted by $\chi_{n_0,\ldots,n_c}^n$, is the $\C$-valued function that gives the trace of each $d \in P_{n,c}$ as a linear operator on $W_{n_0,\ldots,n_c}^n$. In \cite{flath}, Flath, Halverson, and Herbig show that the character of each irreducible representation of $\C P_{n,1}$ is a binomial coefficient. In this section, we find the character of each $W^n_{n_0,\ldots, n_c}$ and show that it is the product of $c$ binomial coefficients.

We say that an edge connecting bottom vertex $i$ to top vertex $j$ is \textit{vertical} if $i=j$.
Below we will show that the problem of computing the trace of digram $d$ can be reduced to considering the subdiagram of $d$ containing only the vertical edges of $d$.
 
\begin{proposition}\label{treq}
Let $d \in P_{n,c}$. Then Tr$(d)=$ Tr$(d_v)$, where $d_v$ is the subdiagram of $d$ resulting from removing all non-vertical edges in $d$.
\end{proposition}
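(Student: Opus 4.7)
The plan is to compute the trace on each irreducible summand $W^n_T$ using the basis $\{x_a \mid a \in P_{n,c},\, \beta(a) = T\}$ and exploit Proposition \ref{3.1eq}. Since $d$ acts on each basis vector by sending it to another basis vector or to $\mathbf{0}$, the trace equals the number of $a$ with $\beta(a) = T$ for which $d x_a = x_a$. By Proposition \ref{3.1eq} this happens precisely when $\tau_i(a) \subseteq \beta_i(d)$ for all $i \in \{1,\ldots,c\}$ and $da = a$. The same description applies verbatim to $d_v$, so it suffices to show that the set of $a$ satisfying these conditions is the same whether we use $d$ or $d_v$.

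The key step is to recast the equality $da = a$ in terms of the vertical edges of $d$. Fix an edge of $a$ from top vertex $k$ to bottom vertex $m$ of color $u_i$. The subset condition $\tau_i(a) \subseteq \beta_i(d)$ guarantees $d$ has a $u_i$-colored edge ending at its bottom vertex $k$, say incident to top vertex $\ell$ of $d$. Under diagram multiplication this edge concatenates with the edge of $a$ to produce an edge from top $\ell$ to bottom $m$ in $da$. For the equality $da = a$ the resulting edge must agree with the original, forcing $\ell = k$; that is, every edge of $d$ that gets ``used'' by $a$ must be vertical. Consequently the conjunction of $\tau_i(a) \subseteq \beta_i(d)$ for all $i$ together with $da = a$ collapses into the single requirement $\tau_i(a) \subseteq \beta_i(d_v)$ for all $i$.

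Now apply the same analysis with $d_v$ in place of $d$. Because every edge of $d_v$ is already vertical by construction, the equality $d_v a = a$ is automatic from $\tau_i(a) \subseteq \beta_i(d_v)$, so $d_v x_a = x_a$ is equivalent to exactly the same condition $\tau_i(a) \subseteq \beta_i(d_v)$ for all $i$. Hence the basis vectors that contribute $1$ to the diagonal of $d$ coincide with those that contribute $1$ to the diagonal of $d_v$, giving $\text{Tr}(d) = \text{Tr}(d_v)$. The only nontrivial ingredient is the bookkeeping in the middle paragraph — tracking which edge of $d$ corresponds to which edge of $a$ under composition and observing that such an edge is forced to be vertical; once that observation is made the result is immediate.
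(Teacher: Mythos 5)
Your proof is correct, but it takes a genuinely different route from the paper's. The paper proves the identity abstractly: it builds the diagram $e_d$ with $\tau(e_d)=\beta(e_d)=\tau(d)$, observes $e_d d = d$, and then uses cyclicity of the trace, $\mathrm{Tr}(e_d d)=\mathrm{Tr}(d e_d)$, noting that $d e_d$ keeps all vertical edges of $d$ but kills a suitably chosen extremal non-vertical edge (one whose bottom vertex $j$ satisfies $j\notin\tau_i(d)$); iterating strips away all non-vertical edges. That argument is representation-independent and never opens up the modules. You instead compute the trace directly on each $W^n_T$ as the number of basis vectors $x_a$ fixed by $d$, and show via Proposition \ref{3.1eq} and the diagram calculus that the fixed-point condition for $d$ and for $d_v$ is literally the same condition $\tau_i(a)\subseteq\beta_i(d_v)$ for all $i$; your key observation, that $da=a$ forces every edge of $d$ whose bottom vertex lies in some $\tau_i(a)$ to be vertical, is correct and carefully justified (each bottom vertex of $da$ carries at most one edge, so the composed edge from top $\ell$ must coincide with the original edge from top $k$, forcing $\ell=k$). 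The trade-off: the paper's proof is shorter and works for the trace in any module, while yours is more concrete and effectively already contains the counting argument used in the proof of Theorem \ref{character}, so the character formula $\prod_i\binom{\ell_i}{n_i}$ falls out of your analysis with almost no extra work.
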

\begin{proof}
Suppose $d \in P_{n,c}$ has at least one non-vertical edge colored $u_i$. Then because $d$ is planar there exists an edge $(j,k)$ colored $u_i$ with bottom vertex $j$ and top vertex $k$ such that $j \notin \tau_i(d)$. (If $d$ contains positive sloping edges colored $u_i$, take $(j,k)$ to be the left most such edge. Otherwise, take $(j,k)$ to be the right most negative sloping edge colored $u_i$.)
Let $e_d \in P_{n,c}$ be the diagram such that $\tau(e_d) = \beta(e_d) = \tau(d)$. Now $e_dd = d$, and hence, by the commutativity of the trace function,
$$Tr(d) = Tr(e_dd) = Tr(de_d).$$ Moreover, every vertical edge in $d$ is also a vertical edge in $de_d$. Furthermore, $j \notin \tau_i(d)= \beta_i(e_d)$ implies $j \notin \beta_i(de_d)$, and thus, $(j,k)$ is not an edge in $de_d$. Iterating this process to remove non-vertical edges in $d$, we see that Tr$(d)=$ Tr$(d_v)$, where $d_v$ is the subdiagram of $d$ resulting from removing all non-vertical edges in $d$.
\end{proof}

\begin{theorem} \label{character}
For $d \in P_{n,c}$ the value of the irreducible character is given by
\[\chi_{n_0,\ldots, n_c}^n(d)=\begin{cases}
\prod\limits_{i=1}^c {\ell_i\choose n_i} & \text{ if } n_i \leq \ell_i \text{ for all } i \in \{1,\ldots, c\}, \\
0 & \text{otherwise,}
\end{cases}\]
where $\ell_i$ is the number of vertical edges in $d$ colored $u_i$.
\end{theorem}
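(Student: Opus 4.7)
The plan is to first invoke Proposition \ref{treq} to reduce to the case where $d$ consists entirely of vertical edges. Write $V_i = \beta_i(d) = \tau_i(d)$ for the set of positions of the vertical edges of $d$ colored $u_i$, so $|V_i| = \ell_i$ and the $V_i$ are pairwise disjoint (as $d$ is a valid rook diagram). Fix any $T \in X_n$ with $|T_i| = n_i$ to serve as the representative of the isomorphism class $W^{n}_{n_0,\ldots,n_c}$, and work in the basis $\{x_a : \beta(a) = T\}$; the trace of $d$ acting on $W^{n}_T$ equals the sum over this basis of the coefficient of $x_a$ in $dx_a$.

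Next I will show that each diagonal coefficient is $0$ or $1$. Proposition \ref{3.1eq} gives $dx_a = \mathbf{0}$ unless $\tau_i(a) \subseteq \beta_i(d) = V_i$ for every $i \in \{1,\ldots,c\}$, in which case $dx_a = x_{da}$. Assuming the containment holds, I claim that $da = a$. Diagrammatically, for each top position $k$ of the product a monochromatic path survives only if $k \in V_i$ and the edge of $a$ incident to top vertex $k$ is also colored $u_i$. Since the $V_j$'s are mutually disjoint, if $k \in V_i$ and $k \in \tau_j(a)$ with $j \geq 1$, the containment forces $k \in V_j$, and hence $j = i$; thus every surviving edge of $a$ whose top endpoint lies in some $V_j$ is colored correctly. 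This yields $\tau_i(da) = V_i \cap \tau_i(a) = \tau_i(a)$ and $\beta(da) = \beta(a) = T$, so $da$ and $a$ share identical top and bottom profiles. Since a planar rook diagram is uniquely determined by its $\tau$ and $\beta$ tuples, we conclude $da = a$, giving $dx_a = x_a$.

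Consequently, the trace equals the number of diagrams $a$ with $\beta(a) = T$ and $\tau_i(a) \subseteq V_i$ for all $i \in \{1,\ldots,c\}$. For such an $a$, the sizes $|\tau_i(a)| = |\beta_i(a)| = |T_i|$ equal $n_i$, so we must choose, for each $i \in \{1,\ldots,c\}$, an $n_i$-subset of $V_i$; the disjointness of the $V_i$'s makes the resulting $\tau_i(a)$'s automatically pairwise disjoint, and $\tau_0(a)$ is forced to be the complement (of size $n - \sum_{i\geq 1} n_i = n_0$). The number of such choices is $\prod_{i=1}^{c} \binom{\ell_i}{n_i}$, with the usual convention $\binom{\ell_i}{n_i}=0$ when $n_i > \ell_i$; this simultaneously produces the product formula and the vanishing case. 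The main subtlety will be verifying $da = a$ in step two—specifically, using disjointness of the $V_j$'s to preclude color mismatches—after which the counting step is immediate.
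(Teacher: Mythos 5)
Your proposal is correct and follows essentially the same route as the paper: reduce to vertical edges via Proposition \ref{treq}, observe that each diagonal entry of $d$ on the basis $\{x_a\}$ is $0$ or $1$, and count the $a$ with $\tau_i(a)\subseteq\beta_i(d)$ by choosing $n_i$-subsets of the $\ell_i$ vertical edges of each color. The only difference is that you explicitly verify $da=a$ under the containment hypothesis (using disjointness of the $V_i$), a step the paper leaves implicit.
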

\begin{proof}
Fix $d \in P_{n,c}$. By Proposition \ref{treq}, for our purposes we may assume $d$ has only vertical edges.  

Now for $x_a \in W^n_{n_0,\ldots, n_c}$, either $dx_a={\bf 0}$ or $dx_a=x_{da}$. Thus, the $x_a$-$x_a$ entry of the matrix of $d$ as a linear operator on $W^n_{n_0,\ldots, n_c}$ will be $1$ if $dx_a=x_{da}=x_a$ and 0 otherwise. Therefore, 
\[\chi^n_{n_0,\ldots, n_c}(d)= |\{x_a ~|~ x_a \in W^n_{n_0,\ldots, n_c}, \text{ and } dx_{a}=x_a\}|.\]

That is, we must count the number of $x_a \in W^n_{n_0,\ldots, n_c}$ such that $da=a$. Let $i \in \{1,\ldots, c\}$. Since all lines in $d$ are vertical, $|\tau_i(a)|=n_i$, and $|\beta_i(d)|=\ell_i$, it follows that there are $ {\ell_i \choose n_i}$ possible ways to choose $\tau_i(a)$, where we let ${\ell_i \choose n_i}=0$ if $n_i >\ell_i$. (Note that there is only one option for $\beta(a)$, since $x_a \in W^n_{n_0,\ldots,n_c}$ implies that $a$ must have some fixed bottom set.) Because this is true for each $i \in \{1,\ldots, c\}$ and the choices for each $i$ are independent, we have 
\[\chi_{n_0,\ldots, n_c}^n(d)=\prod\limits_{i=1}^c {\ell_i\choose n_i}.\]
\end{proof}

 \bigskip
\section{Acknowledgments}

The authors were supported by an NSF-sponsored REU grant provided to the University of California Santa Barbara during the writing of this article.

Many thanks to our advisor, Stephen Bigelow, who presented us with the idea for this paper and who was a tremendously helpful resource throughout the project.

We are also grateful to Anthony Caine and Kevin Malta, who contributed in the early stages of the project, particularly in defining multiplication of the multi-colored planar rook semigroup.

\bibliographystyle{plain}
\bibliography{Bibliography}{}

\begin{thebibliography}{1}

\bibitem{barcelo}
H{\'e}l{\`e}ne Barcelo and Arun Ram.
\newblock Combinatorial representation theory.
\newblock In {\em New perspectives in algebraic combinatorics ({B}erkeley,
  {CA}, 1996--97)}, volume~38 of {\em Math. Sci. Res. Inst. Publ.}, pages
  23--90. Cambridge Univ. Press, Cambridge, 1999.

\bibitem{curtis}
Charles~W. Curtis and Irving Reiner.
\newblock {\em Representation theory of finite groups and associative
  algebras}.
\newblock AMS Chelsea Publishing, Providence, RI, 2006.
\newblock Reprint of the 1962 original.

\bibitem{flath}
Daniel Flath, Tom Halverson, and Kathryn Herbig.
\newblock The planar rook algebra and {P}ascal's triangle.
\newblock {\em Enseign. Math. (2)}, 55(1-2):77--92, 2009.

\bibitem{jones}
Vaughan F.~R. Jones.
\newblock {The Potts Model and the symmetric group}.
\newblock 1993.

\bibitem{kennedy}
A.~Joseph~Kennedy and G.~Muniasamy.
\newblock Rook version of colored partition algebras.
\newblock {\em Bulletin of Mathematical Sciences}, pages 1--17.
\newblock 10.1007/s13373-012-0022-y.

\bibitem{martin}
P.~Martin.
\newblock Temperley-lieb algebras for non-planar statistical mechanics-the
  partition algebra construction.
\newblock {\em Journal of Knot Theory and its Ramification}, 3(1):51--82, 1994.

\bibitem{orellana}
Rosa~C. Orellana.
\newblock On partition algebras for complex reflection groups.
\newblock {\em J. Algebra}, 313(2):590--616, 2007.

\bibitem{parvathi}
M.~Parvathi and A.~Joseph~Kennedy.
\newblock Representations of vertex colored partition algebras.
\newblock {\em Southeast Asian Bull. Math.}, 28(3):493--518, 2004.

\end{thebibliography}

\end{document}